\documentclass[12pt,reqno]{amsart}
\usepackage{amsmath,amssymb,amsthm,bbm,mathtools,calc,verbatim,enumitem,tikz,url,mathrsfs,cite,fullpage,hyperref}
\usepackage{float}
\usepackage{subcaption}

\addtolength{\footskip}{\baselineskip/2}

\usepackage{comment}

\newtheorem{theorem}{Theorem}[section]
\newtheorem{lemma}[theorem]{Lemma}

\theoremstyle{definition}

\newtheorem*{RBalg}{The Multicolour Book Algorithm}
\theoremstyle{remark}
\newtheorem{remark}[theorem]{Remark}

\newcommand\N{\mathbb{N}}
\newcommand\R{\mathbb{R}}
\newcommand\Z{\mathbb{Z}}

\newcommand\cB{\mathcal{B}}

\def\Pr{\mathbb{P}}

\newcommand\Ex{\mathbb{E}}
\newcommand\id{\hbox{$1\mkern-6.5mu1$}}

\newcommand\eps{\varepsilon}

\renewcommand{\le}{\leqslant}
\renewcommand{\ge}{\geqslant}
\renewcommand{\to}{\rightarrow}

\def\eps{\varepsilon}

	\def\R{\mathbb{R}}

	\def\Z{\mathbb{Z}}
	\def\N{\mathbb{N}}
	
	\def\1{\mathbbm{1}}

	\def\<{\langle}
	\def\>{\rangle}	

	\def\cB{\mathcal{B}}

\pagestyle{plain}

\begin{document}

\title{Upper bounds for multicolour Ramsey numbers}

\author{Paul Balister \and B\'ela Bollob\'as \and Marcelo Campos \and Simon Griffiths \and Eoin Hurley\and Robert Morris \and Julian Sahasrabudhe \and Marius Tiba}

\address{Mathematical Institute, University of Oxford, Radcliffe Observatory Quarter, Woodstock Road, Oxford, OX2 6GG, UK}
\email{Paul.Balister@maths.ox.ac.uk}

\address{Department of Pure Mathematics and Mathematical Statistics, Wilberforce Road, Cambridge, CB3 0WA, UK, and Department of Mathematical Sciences, University of Memphis, Memphis, TN 38152, USA}
\email{bb12@cam.ac.uk}

\address{IMPA, Estrada Dona Castorina 110, Jardim Bot\^anico, Rio de Janeiro, 22460-320, Brasil}\email{marcelo.campos@impa.br}

\address{Departamento de Matem\'atica, PUC-Rio, Rua Marqu\^{e}s de S\~{a}o Vicente 225, G\'avea, 22451-900 Rio de Janeiro, Brasil}
\email{simon@puc-rio.br}

\address{Mathematical Institute, University of Oxford, Radcliffe Observatory Quarter, Woodstock Road, Oxford, OX2 6GG, UK}
\email{hurley@maths.ox.ac.uk}

\address{IMPA, Estrada Dona Castorina 110, Jardim Bot\^anico, Rio de Janeiro, 22460-320, Brasil}\email{rob@impa.br}

\address{Department of Pure Mathematics and Mathematical Statistics, Wilberforce Road, Cambridge, CB3 0WA, UK}
\email{jdrs2@cam.ac.uk}

\address{Institute of Mathematics, EPFL SB MATH, MA A2 383 (Bâtiment MA), Station 8, CH-1015 Lausanne, Switzerland}
\email{marius.tiba@epfl.ch}

\thanks{SG was partially supported by FAPERJ (Proc.~201.194/2022) and by CNPq (Proc.~407970/2023-1); EH by the Gravitation Programme NETWORKS (024.002.003) of NWO;  RM by FAPERJ (Proc.~E-26/200.977/2021) and by CNPq (Procs~303681/2020-9 and~407970/2023-1); and JS by European Research Council (ERC) Starting Grant “High Dimensional Probability and Combinatorics”, grant No. 101165900}

\begin{abstract}
The $r$-colour Ramsey number $R_r(k)$ is the minimum $n \in \N$ such that every $r$-colouring of the edges of the complete graph $K_n$ on $n$ vertices contains a monochromatic copy of $K_k$. We prove, for each fixed $r \ge 2$, that 
$$R_r(k) \le e^{-\delta k} r^{rk}$$ 
for some constant $\delta = \delta(r) > 0$ and all sufficiently large $k \in \N$. For each $r \ge 3$, this is the first exponential improvement over the upper bound of~Erd\H{o}s and Szekeres from 1935. In the case $r = 2$, it gives a different (and significantly shorter) proof of a recent result of Campos, Griffiths, Morris and Sahasrabudhe.  
\end{abstract}

\maketitle

\section{Introduction}

One of the most classical (and notorious) problems in combinatorics is to estimate the $r$-colour Ramsey numbers $R_r(k)$, the minimum $n \in \N$ such that every $r$-colouring of the edges of the complete graph on $n$ vertices contains a monochromatic clique on $k$ vertices. Ramsey~\cite{R30} proved in 1930 that $R_r(k)$ is finite for every $k,r \in \N$, and a few years later his theorem was rediscovered in a famous paper of Erd\H{o}s and Szekeres~\cite{ESz35}, whose method gives the bound $R_r(k) \le r^{rk}$. 
This bound turns out to be not too far from the truth: Erd\H{o}s~\cite{E47} showed in 1947 that a random colouring gives an exponential lower bound on $R_2(k)$,  
and his proof can easily be adapted to show that $R_r(k)\ge r^{k/2}$. When $r \ge 3$, this lower bound was significantly improved by Abbott~\cite{A72}, 
who combined Erd\H{o}s' colouring with a simple product construction to show that  
\begin{equation}\label{eq:Rrk:known:bounds}
c^{rk} \le R_r(k) \le r^{rk}
\end{equation}
for some constant $c > 1$. The value of the constant $c$ in~\eqref{eq:Rrk:known:bounds} has been improved in recent years, first by Conlon and Ferber~\cite{CF}, and subsequently by Wigderson~\cite{W} and Sawin~\cite{S}, though in the case $r = 2$ the best known bound (proved in~\cite{S77}) still uses Erd\H{o}s' random colouring, and only improves the bound from~\cite{E47} by a factor of $2$. 

Progress on the upper bound has been much slower, except in the case $r = 2$, where the upper bound of Erd\H{o}s and Szekeres was improved by R\"odl (see~\cite{GR}) and Thomason~\cite{T88} in the 1980s, by Conlon~\cite{C09} in 2009, and by Sah~\cite{S23} in 2023, leading to a bound of the form
$$R_2(k) \le e^{- c(\log k)^2} 4^k$$
for some constant $c > 0$. An exponential improvement was finally obtained in 2023 by Campos, Griffiths, Morris and Sahasrabudhe~\cite{CGMS}, who showed that
$$R_2(k) \le (4 - \eps)^k$$ 
for some constant $\eps > 0$ and all sufficiently large $k \in \N$. The method of~\cite{CGMS} has since been adapted and optimised by Gupta, Ndiaye, Norin and Wei~\cite{GNNW}, who obtained the bound
$$R_2(k) \le 3.8^k.$$ 
The authors of~\cite{GNNW} also noted that the method of~\cite{CGMS} can easily be extended to certain multicolour off-diagonal Ramsey numbers (see below), but when $r \ge 3$ it does not seem to be strong enough to give any improvement at all over the 
bound from~\cite{ESz35} for $r$-colour Ramsey numbers that are close to the diagonal. In fact, we are not aware of \emph{any} previous improvement\footnote{Conlon suggested in his PhD thesis~\cite[page~46]{CPhD} that it may be possible to adapt his method to give a similar improvement for $R_r(k)$ when $r \ge 3$, but also noted several obstructions to obtaining such a result that would seem to require significant additional ideas to overcome.} 
on the upper bound of Erd\H{o}s and Szekeres for 
$R_r(k)$ when $r \ge 3$. 

In this paper we will introduce a new approach to proving upper bounds on $R_r(k)$, and use it to give an exponential improvement over~\cite{ESz35} for all fixed $r \ge 2$. 
 
\begin{theorem}\label{thm:Ramsey:multicolour}
For each $r \ge 2$, there exists $\delta = \delta(r) > 0$ such that 
$$R_r(k) \le e^{-\delta k} r^{rk}$$ 
for all sufficiently large $k \in \N$. 
\end{theorem}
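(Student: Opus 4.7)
The plan is to extend the \emph{book algorithm} of Campos--Griffiths--Morris--Sahasrabudhe~\cite{CGMS} from two to $r$ colours. I would maintain, for each colour $c \in [r]$, a monochromatic-$c$ clique $K_c$ together with a common neighbourhood $X$ consisting of those vertices $v$ for which every edge from $v$ to $K_c$ has colour $c$, for every $c$. A step of the algorithm extends some $K_c$ while shrinking $X$, and we succeed once some $|K_c|$ reaches $k$. The baseline Erd\H{o}s--Szekeres procedure picks any $v \in X$ and a colour $c$ with $|X \cap N_c(v)| \ge |X|/r$, then sets $K_c \leftarrow K_c \cup \{v\}$ and $X \leftarrow X \cap N_c(v)$; since this halts in at most $r(k-1)$ steps, it reproves $R_r(k) \le r^{r(k-1)}$. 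To beat this by $e^{\delta k}$ it suffices to save a constant factor over $1/r$ in a linear number of steps.

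Following the strategy of~\cite{CGMS}, the analysis should split according to the colour-degree distribution inside $X$. In the \emph{non-uniform regime}, a positive fraction of $v \in X$ satisfy $|X \cap N_c(v)| \ge (1/r + \eta)|X|$ for some colour $c$; extending $K_c$ through such a vertex gains a factor $1 + r\eta$ in $|X|$, and accumulating this over $\Theta(k)$ steps already yields the required saving. In the \emph{quasi-uniform regime}, nearly every vertex of $X$ has colour-degrees essentially equal to $|X|/r$ in every colour, and the key technical step is a multicolour book lemma: in this regime some colour $c$ admits a monochromatic-$c$ clique $A \subseteq X$ of size $\Omega(k)$ together with a common colour-$c$ neighbourhood $B \subseteq X$ of size at least $(1/r + \eta)^{|A|}|X|$. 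Replacing $(K_c, X)$ by $(K_c \cup A, B)$ then advances $K_c$ by $|A|$ at the improved rate $1/r + \eta$ per step, again saving $e^{\Omega(k)}$. Summing these gains via a potential that tracks $|X|$ against the residual Erd\H{o}s--Szekeres cost $\prod_c r^{r(k - |K_c|)}$ should close the argument.

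The main obstacle is the book lemma when $r \ge 3$. For $r = 2$ it is proved in~\cite{CGMS} using a tailored analysis of the red/blue structure, but for $r \ge 3$ there is no binary dichotomy to exploit; moreover, naive reductions to two colours --- by grouping colours, or by passing to a monochromatic neighbourhood and applying $(r-1)$-colour Ramsey --- appear to lose exactly the constant we are trying to save, which is presumably why no improvement on Erd\H{o}s--Szekeres for $r \ge 3$ has previously been obtained. A successful argument should track correlations across all $r$ colours simultaneously: one natural scheme is to grow $A$ one vertex at a time while maintaining the invariant that the running common neighbourhood has colour-$c$ density at least $1/r + \eta$ at almost every remaining vertex, using the quasi-uniformity hypothesis to preserve the invariant for $\Omega(k)$ steps via averaging or second-moment arguments over $X$. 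Pinning down the correct quantitative form of quasi-uniformity, and the right potential function to iterate the dichotomy until some $|K_c|$ reaches $k$, is where I expect the bulk of the work to lie.
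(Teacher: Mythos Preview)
Your high-level plan is close to what the paper does: first run Erd\H{o}s--Szekeres steps to reach a nearly colour-regular set, then find a monochromatic book with $t=\Theta(k)$ spine and roughly $r^{-t}n$ pages. You have also put your finger on exactly the right obstruction: the multicolour book lemma is the whole difficulty, and the direct extension of the two-colour machinery from~\cite{CGMS} via ``averaging or second-moment arguments'' is precisely what does \emph{not} work for $r\ge 3$. The paper explicitly notes this, and~\cite{GNNW} observed the same.

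The gap in your proposal is that you do not have a replacement for that step, and the paper's replacement is a genuinely new idea, not a refinement of the CGMS analysis. The paper maintains, alongside a reservoir $X$, \emph{separate} candidate page-sets $Y_1,\dots,Y_r$ and tracks the minimum degrees $p_i(X,Y_i)$ individually. The key lemma says: given $X,Y_1,\dots,Y_r$ and step-sizes $\alpha_i$, there is a vertex $x\in X$, a colour~$\ell$, a subset $X'\subset X$, and a real number $\lambda\ge -1$ such that
\[
|X'|\ge \beta\, e^{-C\sqrt{\lambda+1}}\,|X|,\qquad p_\ell(X',N'_\ell(x))\ge p_\ell+\lambda\alpha_\ell,\qquad p_i(X',N'_i(x))\ge p_i-\alpha_i\ \text{for all }i.
\]
The sub-exponential shape $e^{-C\sqrt{\lambda+1}}$ (rather than $e^{-C\lambda}$) is what makes the density-boost bookkeeping close for all~$r$; with a merely exponential loss you are back to the CGMS-style trade-off that stalls when $r\ge 3$.

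That lemma is not proved by any second-moment or degree-counting argument. Instead one encodes the (centred, normalised) colour-$i$ neighbourhood of each $x\in X$ as a vector $\sigma_i(x)\in\R^{Y_i}$, so that $\langle\sigma_i(x),\sigma_i(y)\rangle\ge\lambda$ is exactly the statement that $y$ has colour-$i$ density at least $p_i+\lambda\alpha_i$ into $N'_i(x)$. The crucial observation is that for independent uniform $U,U'\in X$, \emph{every} mixed moment $\Ex\big[\prod_i\langle\sigma_i(U),\sigma_i(U')\rangle^{\ell_i}\big]$ is non-negative (a tensor-product/Gram computation). One then tests against the entire function
\[
f(x_1,\dots,x_r)=\sum_{j}x_j\prod_{i\ne j}\bigl(2+\cosh\sqrt{x_i}\bigr),
\]
which has only non-negative Taylor coefficients, is $\le -1$ whenever any $x_i\le -3r$, and grows only like $\exp\bigl(\sum_i\sqrt{x_i}\bigr)$ on the positive side. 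Positivity of $\Ex f$ forces, for some $\lambda$ and some colour, the probability bound above with the square-root in the exponent. None of this is visible from the two-colour argument; your proposed scheme of growing $A$ vertex-by-vertex with a second-moment invariant will not produce the $\sqrt{\lambda}$ saving, and without it the multicolour iteration does not beat Erd\H{o}s--Szekeres.
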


In particular, in the case $r = 2$ we will provide a different (and much shorter) proof of the main result from~\cite{CGMS}. Moreover, the constant $\delta(r)$ given by our proof is polynomial in $r$, so we obtain an improvement over the bound of Erd\H{o}s and Szekeres for all $r = k^{o(1)}$. 

The main new ingredient in our proof of Theorem~\ref{thm:Ramsey:multicolour} 
is a geometric lemma (see Lemma~\ref{lem:lambda}, below) which (roughly speaking) says that if $r$ functions $f_1,\ldots,f_r \colon X \to \R^n$ defined on a finite set $X$ exhibit a large amount of negative correlation, in the sense that 
$$\Pr\Big( \big\< f_i(x), f_i(y) \big\> > - 1 \,\text{ for all }\, i \in [r] \Big) \approx 0,$$
where $x$ and $y$ are independent random elements of $X$, then one of the functions $f_\ell$ must exhibit a significant amount of `clustering', in the sense that it maps many pairs of elements of $X$ to pairs of points with large inner product. The second key new idea is to build $r$ monochromatic books (instead of only one, as in~\cite{CGMS}), using the geometric lemma to significantly boost the density of edges of colour $\ell$ between our reservoir set and the book of colour~$\ell$ whenever performing a `normal' (Erd\H{o}s--Szekeres) step in any of the colours would cause the density of edges in that colour between the reservoir and the corresponding book to decrease too much. As a consequence, we will be able to build a $(t,m)$-book (that is, a clique of size $t$ connected to $m$ extra vertices) with $t = \eps k$ for some small constant $\eps > 0$, and $m \approx n / r^t$ pages, avoiding the delicate tradeoffs and calculations of~\cite{CGMS}. 

The rest of this short paper is organised as follows. In Section~\ref{sec:book:thm} we will state our main technical result about the existence of monochromatic books in $r$-colourings, and describe the algorithm that we will use to prove it. In Section~\ref{sec:key:lemma} we will prove the key lemma, and in Section~\ref{sec:book:proof} we will use it to deduce the book theorem. Finally, in Section~\ref{sec:final:proof}, we will deduce our bound on $R_r(k)$ from the book theorem.

\section{The Book Theorem}\label{sec:book:thm}

The book $(A,B)$ is the graph formed by removing the clique with vertex set $B$ from the clique with vertex set $A \cup B$; that is, it is the graph $H$ with vertex set $A \cup B$ and edge set
$$E(H) = \big\{ uv : \{u,v\} \not\subset B \big\},$$
We say that $H$ is a \emph{$(t,m)$-book} if $|A| = t$ and $|B|= m$, and the sets $A$ and $B$ are disjoint. Our plan is to find a monochromatic copy of $K_k$ (in an arbitrary $r$-colouring $\chi$ of $E(K_n)$) by first finding a monochromatic $(t,m)$-book, where
$$t = \eps k \qquad \text{and} \qquad m \ge e^{-t^2/8k} \, r^{-t} \cdot n \ge R_r(k,\ldots,k,k-t),$$
for some constant $\eps = \eps(r) > 0$, where $R_r(k_1,\ldots,k_r)$ is the minimum $N$ such that every $r$-colouring of $E(K_N)$ contains a monochromatic copy of $K_{k_i}$ in colour $i$ for some $i \in [r]$. Since we have
$$R_r(k,\ldots,k,k-t) \le {kr - t \choose k,\ldots,k,k-t} \le e^{-t^2/6k} r^{rk-t},$$
by the method of Erd\H{o}s and Szekeres~\cite{ESz35}, this will suffice to prove Theorem~\ref{thm:Ramsey:multicolour}. 

We will find a monochromatic $(t,m)$-book by constructing $r$ monochromatic books, one in each colour. The first step is to find a large subset of the vertices in which the colouring is almost regular, and every colour has density close to $1/r$. To do so, we simply run the Erd\H{o}s--Szekeres algorithm until we find such a subset: since we `win' a factor of $1 + \eps$ in each step (see Lemma~\ref{lem:ESz:steps}), either we find a suitable set within $\eps k$ steps, or we are already done. We then (randomly) partition\footnote{In fact, this is unnecessary: we can simply take $X = Y_1 = \cdots = Y_r$ all equal to the set of remaining vertices. However, the reader may find it easier to picture the sets as disjoint.} the remaining vertices into $r+1$ sets: a `reservoir' set $X$, and sets $Y_1,\ldots,Y_r$ that we will use to construct our books. We will find a set $T \subset X$ of size $t = \eps k$ such that $T$ induces a monochromatic clique in some colour $i \in [r]$, and 
$$\bigg| \bigcap_{u \in T} N_i(u) \cap Y_i \bigg| \ge m,$$
where we write $N_i(u)$ to denote the neighbourhood of $u$ in colour $i$. 

We are now ready to state our main technical theorem about finding monochromatic books in $r$-colourings. With an eye to potential future applications, we will state a more general version than we need for Theorem~\ref{thm:Ramsey:multicolour}. In particular, the parameter $\mu$ (which in our application will be $\Theta(r^3)$) allows us to control the loss in the size of the book (relative to $p^t |Y_i|$) in terms of the size of the reservoir set $X$. To avoid repetition, let us fix $n,r \in \N$.

\begin{theorem}\label{thm:book}
Let\/ $\chi$ be an\/ $r$-colouring of\/ $E(K_n)$, and let\/ $X,Y_1,\ldots,Y_r \subset V(K_n)$. 
For every $p > 0$ and $\mu \ge 2^{10} r^3$, and every $t,m \in \N$ with $t \ge \mu^5 / p^2$, the following holds. If 
$$|N_i(x) \cap Y_i| \ge p|Y_i|$$
for every $x \in X$ and $i \in [r]$, and moreover
$$|X| \ge \bigg( \frac{\mu^2}{p} \bigg)^{\mu r t} \qquad \text{and} \qquad |Y_i| \ge \bigg( \frac{e^{2^{13} r^3 / \mu^2}}{p} \bigg)^t \, m,$$ 
then $\chi$ contains a monochromatic $(t,m)$-book.
\end{theorem}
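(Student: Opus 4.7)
I would prove Theorem~\ref{thm:book} via an iterative algorithm that simultaneously grows $r$ monochromatic cliques $T_1,\dots,T_r$ with page sets $B_i:=\bigcap_{u\in T_i}N_i(u)\cap Y_i$, while maintaining a shrinking reservoir $X_j\subseteq X$ of vertices that (i) are colour-$i$-adjacent to all of $T_i$ for each $i\in[r]$, and (ii) satisfy $|N_i(u)\cap B_i|\geq p_i|B_i|$ for running density parameters $p_i\geq p$. The algorithm uses two types of move. A \emph{book step} in colour $i$ picks $v\in X_j$ with $d_i(v):=|N_i(v)\cap B_i|/|B_i|\geq(1-1/\mu)p_i$, adds $v$ to $T_i$, replaces $B_i$ by $B_i\cap N_i(v)$, and prunes $X_j$ to the $u\in X_j\cap N_i(v)$ that still satisfy (ii). A \emph{boost step} in colour $\ell$ picks $v\in X_j$, replaces $B_\ell$ by $B_\ell\cap N_\ell(v)$, and keeps only those $u$ whose colour-$\ell$ density to the new $B_\ell$ is at least $(1+\delta)p_\ell$; here $v$ is \emph{not} added to any clique, but $p_\ell$ is updated to $(1+\delta)p_\ell$. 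Book steps are attempted first; the algorithm halts as soon as some $|T_i|$ reaches $t$.

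The availability of a boost step whenever no book step is possible in any colour is the role of the key Lemma~\ref{lem:lambda}. For each $v\in X_j$ and $i\in[r]$, I would associate the centred vector $f_i(v)\in\R^{B_i}$ proportional to $\mathbf{1}_{N_i(v)\cap B_i}-p_i\mathbf{1}_{B_i}$, with a normalisation chosen so that the lemma's hypothesis takes its stated form. A configuration in which no book step is possible in any colour translates---and this is the main calculation of the reduction---into the statement that a uniformly random pair $x,y\in X_j$ satisfies $\langle f_i(x),f_i(y)\rangle<-1$ in at least one $i$ with overwhelming probability, exactly matching the negative-correlation hypothesis of Lemma~\ref{lem:lambda}. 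The lemma then produces a colour $\ell$ and many pairs $(x,y)$ with $\langle f_\ell(x),f_\ell(y)\rangle$ large; averaging over $v^*\in X_j$ extracts a boost vertex together with a large new reservoir on which the colour-$\ell$ density to $B_\ell\cap N_\ell(v^*)$ has jumped by a factor $1+\delta$.

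The proof concludes with an accounting. Let $s_i,b_i$ count book and boost steps in colour $i$. Each book step in $i$ shrinks $|B_i|$ by a factor $\geq(1-1/\mu)p$, and each boost step in $\ell$ shrinks $|B_\ell|$ by $\geq p$ while raising $p_\ell$ by a factor $1+\delta$; since $p_\ell\leq 1$ throughout, we obtain $b_\ell\leq\log(1/p)/\log(1+\delta)$. A suitable choice $\delta=\Theta(r^{-3}/\mu^2)$, combined with the hypothesis $|Y_i|\geq(e^{2^{13}r^3/\mu^2}/p)^t m$, then yields $|B_i|\geq m$ the moment $s_i=t$. Simultaneously, each step shrinks $|X_j|$ by at most the clustering-loss factor dictated by Lemma~\ref{lem:lambda} (for boost steps) or the $O(1/\mu)$-loss (for book steps), and the product of these over the $O(rt)$ total steps is absorbed by the hypothesis $|X|\geq(\mu^2/p)^{\mu rt}$. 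The delicate part of the argument is the stuck-to-clustering reduction together with the simultaneous calibration of $\delta$ and the book-step threshold $1/\mu$ so that the $X$- and $B$-budgets are both respected; modulo this, everything else is routine bookkeeping.
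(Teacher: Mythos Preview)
Your sketch has the right high-level shape---$r$ parallel books, a reservoir, and a dichotomy between ``ordinary'' and ``boost'' moves---but the mechanism by which you switch between the two does not work as written, and the missing piece is exactly where the paper's new idea lives.

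First, your invariant (ii) guarantees $d_i(v)\ge p_i>(1-1/\mu)p_i$ for every $v\in X_j$ and every $i$, so a book step is \emph{always} available; your algorithm therefore never enters the boost branch. The genuine obstruction is not finding a vertex of high density into $B_i$, but rather controlling how many $u\in X_j\cap N_i(v)$ still satisfy (ii) after $B_i$ is replaced by $B_i\cap N_i(v)$. You assert an ``$O(1/\mu)$-loss'' for book steps, but nothing in your description bounds this pruning; an adversary can arrange that almost no $u$ has $|N_i(u)\cap N_i(v)\cap B_i|\ge p_i|N_i(v)\cap B_i|$. This is precisely the problem the key lemma is designed to solve, and it must be invoked at \emph{every} step, not only when ``stuck''.

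Second, Lemma~\ref{lem:lambda} has no negative-correlation hypothesis: it is an unconditional statement that for \emph{any} configuration there exist $\ell$ and $\lambda\ge-1$ with the stated probability bound $\beta e^{-C\sqrt{\lambda+1}}$. The paper's algorithm applies (the consequence of) this lemma at each step, obtains the variable $\lambda$, and then branches on whether $\lambda\le\lambda_0$ (colour step: reservoir loss $\approx(\beta/r)e^{-C\sqrt{\lambda_0+1}}$, density drops by at most $\alpha_i$) or $\lambda>\lambda_0$ (density-boost: reservoir loss $\beta e^{-C\sqrt{\lambda+1}}$, density rises by $\lambda\alpha_\ell$). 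Your fixed boost increment $\delta$ and your ``stuck $\Rightarrow$ clustering'' reduction miss this: the lemma does not hand you a boost of prescribed size $\delta$, it hands you a $\lambda$ that may be anywhere in $[-1,\infty)$, and the whole accounting hinges on the \emph{sub-exponential} tradeoff $e^{-C\sqrt{\lambda+1}}$ between reservoir loss and density gain. In particular, the bound on $\sum\sqrt{\lambda(j)}$ over boost steps (Lemma~\ref{lem:sum:of:lambdas}) is what lets the $X$-budget $(\mu^2/p)^{\mu rt}$ survive, and this has no analogue in your fixed-$\delta$ scheme.
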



The algorithm that we will use to prove Theorem~\ref{thm:book} is somewhat simpler than the one introduced in~\cite{CGMS}. It is based on the following key lemma, which we will use in each step. Given sets $X,Y \subset V(K_n)$ and a colour $i \in [r]$, define 
$$p_i(X,Y) = \min\bigg\{ \frac{|N_i(x) \cap Y|}{|Y|} : x \in X \bigg\},$$
and note that $p_i(X',Y) \ge p_i(X,Y)$ for every subset $X' \subset X$. Set $\beta = 3^{-4r}$ and $C = 4r^{3/2}$.
 
\begin{lemma}\label{key:lemma}
Let\/ $\chi$ be an\/ $r$-colouring of\/ $E(K_n)$, let\/ $X,Y_1,\ldots,Y_r \subset V(K_n)$ be non-empty sets of vertices, and let $\alpha_1,\ldots,\alpha_r > 0$. There exists a vertex $x \in X$, a colour $\ell \in [r]$, sets $X' \subset X$ and\/ $Y'_1,\ldots,Y'_r\,$ with\/ $Y'_i \subset N_i(x) \cap Y_i\,$ for each $i \in [r]$, and\/ $\lambda \ge -1$, such that 
\begin{equation}\label{eq:key:ell}
|X'| \ge \beta e^{- C \sqrt{\lambda + 1}} |X| \qquad \text{and} \qquad p_\ell( X', Y'_\ell ) \ge p_\ell(X,Y_\ell) + \lambda \alpha_\ell,
\end{equation}
and moreover
\begin{equation}\label{eq:key:alli}
|Y'_i| = p_i(X,Y_i) |Y_i| \qquad \text{and} \qquad p_i( X', Y'_i ) \ge p_i(X,Y_i) - \alpha_i
\end{equation}
for every $i \in [r]$.
\end{lemma}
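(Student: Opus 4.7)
\emph{Plan.}\ The idea is to encode the neighbourhoods of vertices as vectors in Euclidean space so that both inequalities in~\eqref{eq:key:ell} and~\eqref{eq:key:alli} become inner-product conditions, and then to invoke the geometric lemma (Lemma~\ref{lem:lambda}) sketched in the introduction. Write $p_i := p_i(X,Y_i)$ and, for each $x \in X$ and colour $i$, set
\[
f_i(x) \;:=\; \frac{\id_{N_i(x)\cap Y_i} - p_i\,\id_{Y_i}}{\alpha_i\sqrt{p_i|Y_i|}} \;\in\; \R^{Y_i}.
\]
Expanding $\langle f_i(x), f_i(x')\rangle$ recovers (up to lower-order corrections) the deviation of the co-neighbourhood density $|N_i(x)\cap N_i(x')\cap Y_i|/(p_i|Y_i|)$ from $p_i$, measured in units of $\alpha_i$. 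After choosing a vertex $x \in X$ and taking $Y'_i$ to be any subset of $N_i(x)\cap Y_i$ of size exactly $p_i|Y_i|$ --- possible since $|N_i(x)\cap Y_i| \ge p_i|Y_i|$ by the definition of $p_i$ --- the statement $p_i(X',Y'_i) \ge p_i-\alpha_i$ becomes (modulo this correction) $\langle f_i(x), f_i(x')\rangle > -1$ for every $x' \in X'$, while the boost $p_\ell(X',Y'_\ell) \ge p_\ell + \lambda\alpha_\ell$ becomes $\langle f_\ell(x), f_\ell(x')\rangle \ge \lambda$.

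\emph{Dichotomy.}\ For each $x \in X$, let $G(x) = \{x' \in X : \langle f_i(x), f_i(x')\rangle > -1\text{ for every } i \in [r]\}$. If some $x$ satisfies $|G(x)| \ge \beta|X|$, then we take $X' := G(x)$, an arbitrary colour $\ell$, and $\lambda = -1$: indeed $e^{-C\sqrt{\lambda+1}} = 1$, so~\eqref{eq:key:ell} reduces to $|X'|\ge\beta|X|$, and its second half follows from~\eqref{eq:key:alli}. Otherwise $|G(x)| < \beta|X|$ for every $x$, which, for $(x,x')$ uniform in $X^2$, gives
\[
\Pr\bigl(\exists\, i \in [r]:\langle f_i(x), f_i(x')\rangle \le -1\bigr) \;>\; 1 - \beta,
\]
precisely the hypothesis of the geometric lemma applied to the family $(f_i)_{i \in [r]}$.

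\emph{Boost and main obstacle.}\ In this second case the geometric lemma yields a colour $\ell \in [r]$ and a threshold $\lambda \ge -1$ for which the pairs $(x,x') \in X^2$ with $\langle f_\ell(x), f_\ell(x')\rangle \ge \lambda$ form a set of density at least $\beta e^{-C\sqrt{\lambda+1}}$. By averaging we fix an $x \in X$ whose slice $S_\ell(x) = \{x' : \langle f_\ell(x), f_\ell(x')\rangle \ge \lambda\}$ retains this fraction of $X$, and we then take $X'$ to be $S_\ell(x)$ intersected with the ``non-bad'' set $\{x' : \langle f_i(x), f_i(x')\rangle > -1\text{ for all } i \ne \ell\}$. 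The main obstacle is to show that this intersection still has size $\ge \beta e^{-C\sqrt{\lambda+1}}|X|$ with the specified constants $\beta = 3^{-4r}$ and $C = 4r^{3/2}$: what is needed is for the geometric lemma to produce pairs that are simultaneously $\ell$-boosted \emph{and} not bad in any other colour. I expect this is either built into the statement of Lemma~\ref{lem:lambda} or follows from its proof by restricting the pivot $x$ to avoid the thin ``universally bad'' set already excluded in Case~A. Once this coupling is established, the remaining steps --- rigorously translating between inner products and densities and tracking the constants --- are routine calculations.
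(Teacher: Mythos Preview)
Your approach is essentially the paper's, but two concrete fixes dissolve all the complications you flag.

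First, the simultaneous control you identify as the ``main obstacle'' is already the \emph{conclusion} of Lemma~\ref{lem:lambda}: it bounds the probability of the joint event $\{\langle\sigma_\ell(U),\sigma_\ell(U')\rangle\ge\lambda\}\cap\bigcap_{j\ne\ell}\{\langle\sigma_j(U),\sigma_j(U')\rangle\ge-1\}$, not just the $\ell$-boosted event alone. Moreover Lemma~\ref{lem:lambda} has no hypothesis whatsoever --- the informal description in the introduction (``if the functions exhibit negative correlation\dots'') is motivation, not an assumption. So your dichotomy is unnecessary and your Case~B reasoning is based on a misreading: simply apply Lemma~\ref{lem:lambda} directly to obtain $\ell$ and $\lambda$, average over $U$ to fix $x$, and take $X'$ to be the set of $y\in X$ witnessing the joint event. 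No intersection step, no coupling argument.

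Second, two technical points. (i) Your normalisation is off: with $\alpha_i$ rather than $\sqrt{\alpha_i}$ in the denominator, $\langle f_i(x),f_i(y)\rangle\ge\lambda$ translates to a density shift of order $\lambda\alpha_i^2$, not $\lambda\alpha_i$. (ii) The ``lower-order corrections'' vanish if you truncate \emph{before} forming the vectors rather than after choosing $x$: for each $x$ and $i$ pick $N'_i(x)\subset N_i(x)\cap Y_i$ of size exactly $p_i|Y_i|$, set $\sigma_i(x)=(\id_{N'_i(x)}-p_i\id_{Y_i})/\sqrt{\alpha_ip_i|Y_i|}$, and let $Y'_i=N'_i(x)$. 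Then $\langle\sigma_i(x),\sigma_i(y)\rangle\ge\lambda$ is \emph{exactly} equivalent to $|N'_i(x)\cap N'_i(y)|\ge(p_i+\lambda\alpha_i)p_i|Y_i|$, hence to $|N_i(y)\cap Y'_i|\ge(p_i+\lambda\alpha_i)|Y'_i|$, and the proof is a clean two-line computation with no approximations to track.
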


Since the statement of the lemma might seem a little confusing at first sight, before continuing let us explain roughly how we use it in the algorithm below. We will build a collection of sets $T_1,\ldots,T_r$, with each $T_i$ being a monochromatic clique in colour $i$, and replace the sets $X$ and $Y_1,\ldots,Y_r$ by subsets satisfying
$$X \subset \bigcap_{j \in [r]} \bigcap_{u \in T_j} N_j(u) \qquad \text{and} \qquad Y_i \subset \bigcap_{u \in T_i} N_i(u)$$
for each $i \in [r]$, and such that $p_i(X,Y_i)$ does not decrease too much below its initial value. In each step we will apply Lemma~\ref{key:lemma} and then, depending on the value of $\lambda$, either add the vertex $x$ to one of the sets $T_j$ (a `colour step'), or replace $X$ and $Y_\ell$ by $X'$ and $Y_\ell'$, and observe that $p_\ell(X,Y_\ell)$ increases significantly (a `density-boost step'). 

To be more precise, if the $\lambda$ given by Lemma~\ref{key:lemma} is `small', then we will choose a colour $j \in [r]$ such that the set $X'' = N_j(x) \cap X'$ is as large as possible, and update as follows:
$$X \to X'', \qquad Y_j \to Y'_j \qquad \text{and} \qquad T_j \to T_j \cup \{x\}.$$
This update does not cause $p_j(X,Y_j)$ to decrease too much (by~\eqref{eq:key:alli}), and does not cause $p_i(X,Y_i)$ to decrease at all if $i \ne j$, since the set $Y_i$ does not change and $X'' \subset X$. Moreover, since $\lambda$ is small, the update does not cause the set $X$ to shrink too much. 

If $\lambda$ is `large', on the other hand, then we instead update the sets as follows:
$$X \to X' \qquad \text{and} \qquad Y_\ell \to Y'_\ell,$$
with all other sets staying the same. This time $X$ shrinks by a much larger factor, but to compensate, $p_\ell(X,Y_\ell)$ increases by $\lambda \alpha_\ell$. It will turn out that we are happy with this trade-off because the function $e^{- C \sqrt{\lambda + 1}}$ that appears in our bound~\eqref{eq:key:ell} on the size of $X'$ is sub-exponential in $\lambda$ (see below for discussion of why this is sufficient). 

We are now ready to define the algorithm that we will use to prove Theorem~\ref{thm:book}. The inputs to the algorithm are an $r$-colouring $\chi$ of $E(K_n)$, 
sets $X,Y_1,\ldots, Y_r \subset V(K_n)$, and parameters $t \in \N$ (the size of the book that we are trying to build), $\lambda_0 \ge -1$ (the cut-off between values of $\lambda$ that are `small' and `large'), and $\delta > 0$ (a small constant that bounds the total decrease in $p_i(X,Y_i)$). We also set  
$$p_0 = \min\big\{ p_i(X,Y_i) : i \in [r] \big\},$$
and emphasize that $t$, $\lambda_0$, $\delta$ and $p_0$ remain fixed throughout the algorithm. 

\begin{RBalg}
Set $T_1 = \cdots = T_r = \emptyset$, and repeat the following steps until either $X = \emptyset$ or $\max\big\{ |T_i| : i \in [r] \big\} = t$. 
\begin{enumerate}[label=\arabic*., ref=\arabic*] 
\item\label{Alg:Step1} Applying the key lemma: let the vertex $x \in X$, the colour $\ell \in [r]$, the sets $X' \subset X$ and $Y'_1,\ldots,Y'_r$, and $\lambda \ge -1$ be given by Lemma~\ref{key:lemma}, applied with
\begin{equation}\label{def:alpha}
\alpha_i = \frac{p_i(X,Y_i) - p_0 + \delta}{t}
\end{equation}
for each $i \in [r]$, and go to Step~2.\smallskip
\item\label{Alg:Step2} Colour step: If $\lambda \le \lambda_0$, then choose a colour $j \in [r]$ such that the set
$$X'' = N_j(x) \cap X'$$ 
has at least $(|X'| - 1)/r$ elements, and update the sets as follows:
$$X \to X'', \qquad Y_j \to Y'_j \qquad \text{and} \qquad T_j \to T_j \cup \{x\}$$
and go to Step~1. Otherwise go to Step~3.\smallskip
\item\label{Alg:Step3} Density-boost step: If $\lambda > \lambda_0$, then we update the sets as follows:
$$X \to X' \qquad \text{and} \qquad Y_\ell \to Y'_\ell,$$
and go to Step~1.
\end{enumerate}  
\end{RBalg} 

We remark that in our application of the algorithm, both $\lambda_0$ and $\delta$ will be polynomial functions of $r$, and our sets and colouring will satisfy $p_0 \ge 1/r - \delta$. Let us also emphasize once again that we only update one of the sets $Y_i$ in each round of the algorithm (either the set $Y_j$ in Step~2 or the set $Y_\ell$ in Step~3), and at most one of the sets $T_i$. 

It remains to explain our choice of $\alpha_i$, and why it is important that the bound on the size of $X'$ given by Lemma~\ref{key:lemma} is sub-exponential in $\lambda$. The definition of $\alpha_i$ is just a continuous version of the definition of the parameter $\alpha$ from~\cite{CGMS}, with the parameter $\delta$ controlling the minimum value that it can take. The function~\eqref{def:alpha} has two important properties: it will never be smaller than $\delta / 4t$, and it increases linearly with $p_i(X,Y_i)$, which implies (see Lemma~\ref{lem:Bi:max}) that the total number of density-boost steps is much smaller than $t$, as long as $\lambda_0 \gg \log(1/\delta)$. This second property of $\alpha_i$ moreover allows us to bound the sum of the $\lambda$ that are used in density-boost steps by $O\big( \log(1/\delta) \cdot t \big)$, which in turn implies that our reservoir set $X$ does not shrink too much as a result of density-boost steps (see Lemmas~\ref{lem:X:lower:bound} and~\ref{lem:sum:of:lambdas}). It is here that we require the factor by which $X$ shrinks to be a sub-exponential function of $\lambda$.

\section{The Proof of the Key Lemma}\label{sec:key:lemma}

In this section we will prove Lemma~\ref{key:lemma}. The main step in the proof is the following geometric lemma, which is the most important new ingredient in the proof of Theorem~\ref{thm:Ramsey:multicolour}. The proof of the geometric lemma is (to us, at least) surprisingly short and simple. 

Recall that we fixed $n,r \in \N$, and defined $\beta = 3^{-4r}$ and $C = 4r^{3/2}$. In this section we will write $\langle\cdot,\cdot\rangle$ to denote the standard inner product on $\R^n$.  

\begin{lemma}\label{lem:lambda}
Let\/ $U$ and\/ $U'$ be i.i.d.~random variables taking values in a finite set~$X$, and let $\sigma_1,\ldots,\sigma_r \colon X \to \R^n$ be arbitrary functions. There exist $\lambda \ge -1$ and\/ $i \in [r]$ such that
$$\Pr\Big( \big\langle \sigma_i(U),\sigma_i(U') \big\rangle \ge \lambda \, \text{ and } \, \big\langle \sigma_j(U), \sigma_j(U') \big\rangle \ge -1 \, \text{ for all } \, j \ne i \Big) \ge \beta e^{- C\sqrt{\lambda + 1}}.$$
\end{lemma}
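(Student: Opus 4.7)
I would argue by contradiction, assuming the failure hypothesis
$$\Pr\bigl( Z_i \ge \lambda \,\text{ and }\, Z_j \ge -1 \text{ for all } j \ne i \bigr) < \beta e^{-C\sqrt{\lambda+1}} \qquad (\star)$$
for every $i \in [r]$ and $\lambda \ge -1$, where $Z_i := \langle \sigma_i(U), \sigma_i(U')\rangle$. The plan is to contradict a positivity statement coming from the Gram structure $\E[\prod_{i \in S}Z_i] = \|\E[\bigotimes_{i \in S}\sigma_i(U)]\|^2 \ge 0$, which holds for every $S \subseteq [r]$.

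The case $r = 1$ is a warm-up and sets the template. Since $\E[Z_1] = \|\E\sigma_1(U)\|^2 \ge 0$ we have $\E[Z_1^+] \ge \E[Z_1^-]$. On the one hand, integrating $(\star)$ over $\lambda > -1$ yields
$$\E[Z_1^+] = \int_0^\infty \Pr(Z_1 \ge t)\, dt < \beta \int_0^\infty e^{-C\sqrt{t+1}}\, dt = \tfrac{2\beta(C+1)}{C^2}\,e^{-C},$$
which is minuscule; on the other hand, $(\star)$ at $\lambda = -1$ gives $\Pr(Z_1 < -1) > 1 - \beta$, hence $\E[Z_1^-] > 1 - \beta$. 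These are incompatible for $\beta = 3^{-4}, C = 4$.

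For $r \ge 2$, the natural positive functional is $W := \prod_{i \in [r]}(Z_i + 1)$. Expanding and applying the tensor PSD identity term by term yields
$$\E[W] = \sum_{S \subseteq [r]} \E\Big[\prod_{i \in S}Z_i\Big] \ge 1,$$
with the lower bound coming from the $S = \emptyset$ term and non-negativity of all others. To upper bound the contribution supported on $\mathcal{A} := \{Z_j \ge -1 \,\forall j\}$, I would apply layer-cake to rewrite
$$\E[W \, \id_{\mathcal{A}}] = \int_{[-1,\infty)^r} \Pr(Z_i \ge t_i \,\forall i)\, dt,$$
and bound the joint tail using $(\star)$ and AM-GM as $\Pr(Z_i \ge t_i \,\forall i) \le \min_i \Pr(Z_i \ge t_i, \mathcal{A}) \le \beta \prod_i e^{-(C/r)\sqrt{t_i+1}}$. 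A one-dimensional substitution $u = \sqrt{t+1}$ on each coordinate then gives
$$\E[W \, \id_{\mathcal{A}}] \le \beta \bigl(2r^2/C^2\bigr)^r = (648\, r)^{-r},$$
much smaller than $1$ for any $r \ge 1$.

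The main obstacle is to rule out that the off-$\mathcal{A}$ contribution $\E[W \, \id_{\mathcal{A}^c}]$ makes up the remaining mass of at least $1 - (648r)^{-r}$. On $\mathcal{A}^c$ the product $W$ has sign $(-1)^{|T|}$, where $T = \{i : Z_i < -1\}$, so it can a priori be large and positive. I would stratify $\mathcal{A}^c$ by $T$ and, on the stratum $\{T = T_0\}$, apply Cauchy-Schwarz to factor
$|W| = \prod_{i \in T_0}|Z_i + 1| \cdot \prod_{i \notin T_0}(Z_i + 1)$,
controlling the negative group through a PSD second-moment estimate (since each $Z_i = \langle \sigma_i(U), \sigma_i(U')\rangle$ has $\E[Z_i^2 \, \id_{Z_i < -1}]$ bounded in terms of moments of $\|\sigma_i(U)\|^2$), and the positive group through a partial tensor-PSD version of the layer-cake bound above applied to $\prod_{i \notin T_0}(Z_i + 1)$. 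The per-coordinate factor $3^{-4}$ in $\beta = 3^{-4r}$ is strongly suggestive of exactly this $r$-fold stratification, with a loss of $3^{-4}$ per coordinate; bookkeeping the constants through this calculation to confirm the gap closes is the technically trickiest step, and is where I expect the cleverness of the "short and simple" proof advertised in the paper to reside.
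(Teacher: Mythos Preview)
Your framework is right---you correctly identify that the positivity of mixed moments (the tensor PSD identity) is the engine of the proof, and your layer-cake computation on the good event $\mathcal{A}$ is in the same spirit as the paper's. But the gap you flag in the off-$\mathcal{A}$ part is real and your proposed fix will not close it. You suggest controlling $\prod_{i\in T_0}|Z_i+1|$ via second moments of $Z_i$, but the functions $\sigma_i$ are \emph{arbitrary}: there is no bound whatsoever on $\|\sigma_i(U)\|$, so $\E[Z_i^2]$ can be as large as you like. No Cauchy--Schwarz or moment argument can control the contribution from $\mathcal{A}^c$ using only the hypotheses given.

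The paper's idea is to replace your test function $W=\prod_i(Z_i+1)$ by one that is \emph{automatically} $\le -1$ on the bad event, so that the off-$\mathcal{A}$ contribution works in your favour rather than against you. Concretely, they take
\[
f(x_1,\dots,x_r)=\sum_{j} x_j\prod_{i\ne j}\bigl(2+\cosh\sqrt{x_i}\bigr),
\]
applied to $Z_i=3r\langle\sigma_i(U),\sigma_i(U')\rangle$. This $f$ has non-negative Taylor coefficients (so $\E[f(Z)]\ge 0$ by the moments lemma---note this requires positivity of \emph{all} mixed moments $\E[\prod Z_i^{\ell_i}]$, not just the multilinear ones you use), grows like $\exp(\sum\sqrt{x_i})$ on the good event, and satisfies $f\le -1$ whenever some $x_i\le -3r$. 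The last property is the missing ingredient: it turns the off-event into a helpful $-1$ rather than an uncontrolled positive term, and the argument then closes via a short Fubini computation on $M=\max_i\langle\sigma_i(U),\sigma_i(U')\rangle$.
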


The idea of the proof is as follows. We will first observe (see Lemma~\ref{lem:moments}) that all of the moments of the inner products $\langle \sigma_i(U),\sigma_i(U') \rangle$ are positive. The key step is then to define a function $f \colon \R^r \to \R$ (see~\eqref{def:f}) whose Taylor expansion has non-negative coefficients, that is only positive close to the positive quadrant, and that does not grow too fast. With this function in hand, the lemma will then follow from a simple calculation. 

We begin with the following simple but key observation. 

\begin{lemma}\label{lem:moments}
Let $U$ and\/ $U'$ be i.i.d.~random variables taking values in a finite set~$X$, and let\/ $\sigma_1,\ldots,\sigma_r \colon X \to \R^n$ be arbitrary functions. Then
$$\Ex\Big[ \big\langle \sigma_1(U),\sigma_1(U') \big\rangle^{\ell_1} \cdots \big\langle \sigma_r(U),\sigma_r(U') \big\rangle^{\ell_r} \Big] \ge 0$$
for every $(\ell_1,\dots,\ell_r) \in \Z^r$ with $\ell_1,\dots,\ell_r \ge 0$.
\end{lemma}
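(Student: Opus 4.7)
The plan is to reduce the claim to the observation that $\Ex[\langle V, V'\rangle] \ge 0$ whenever $V, V'$ are i.i.d.\ random vectors (since this equals $\|\Ex V\|^2$), using the standard tensor power trick to combine the exponents $\ell_1,\ldots,\ell_r$ into a single inner product.

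The key identity is that for any vectors $v, w \in \R^n$ and any $\ell \in \Z_{\ge 0}$, we have $\langle v^{\otimes \ell}, w^{\otimes \ell}\rangle_{(\R^n)^{\otimes \ell}} = \langle v, w\rangle^\ell$, and more generally, for vectors $v_i, w_i \in \R^{n_i}$,
$$\Big\langle v_1 \otimes \cdots \otimes v_r,\, w_1 \otimes \cdots \otimes w_r\Big\rangle = \prod_{i=1}^{r} \langle v_i, w_i\rangle.$$
So first I would define, for each $u \in X$, the tensor
$$\tau(u) \;:=\; \sigma_1(u)^{\otimes \ell_1} \otimes \cdots \otimes \sigma_r(u)^{\otimes \ell_r} \;\in\; (\R^n)^{\otimes (\ell_1 + \cdots + \ell_r)},$$
and observe that by the identity above,
$$\big\langle \tau(u), \tau(u') \big\rangle \;=\; \prod_{i=1}^r \big\langle \sigma_i(u), \sigma_i(u')\big\rangle^{\ell_i}.$$

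Taking expectations and using that $U, U'$ are independent (so $\Ex[\tau(U) \otimes \tau(U')] = \Ex\tau(U) \otimes \Ex\tau(U')$, and the inner product is bilinear), we get
$$\Ex\bigg[ \prod_{i=1}^{r} \big\langle \sigma_i(U), \sigma_i(U')\big\rangle^{\ell_i} \bigg] \;=\; \Ex\big[\langle \tau(U), \tau(U')\rangle\big] \;=\; \big\langle \Ex\tau(U), \Ex\tau(U')\big\rangle.$$
Since $U$ and $U'$ have the same distribution, $\Ex\tau(U) = \Ex\tau(U')$, and therefore the right-hand side equals $\|\Ex\tau(U)\|^2 \ge 0$, which is exactly the desired inequality.

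There is no real obstacle here; the only thing to check carefully is the bilinearity computation that interchanges expectation with the inner product, which is immediate since $\tau$ takes values in a finite-dimensional Euclidean space. The power of the lemma lies entirely in the tensor trick packaging all $r$ exponents into one i.i.d.\ inner product expectation.
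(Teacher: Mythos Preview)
Your proof is correct and is essentially identical to the paper's own argument: the paper also forms the tensor $\sigma_{a_1}(U)\otimes\cdots\otimes\sigma_{a_\ell}(U)$ (with each $\sigma_i$ repeated $\ell_i$ times), observes that its inner product with the corresponding tensor for $U'$ equals the desired product of powers, and then uses independence and identical distribution to conclude $\Ex[\langle Z,Z'\rangle]=\langle\Ex Z,\Ex Z'\rangle=\|\Ex Z\|^2\ge 0$.
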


\begin{proof}
To simplify the notation, let us write 
$$\big\langle \sigma_1(U),\sigma_1(U') \big\rangle^{\ell_1} \cdots \big\langle \sigma_r(U), \sigma_r(U') \big\rangle^{\ell_r} = \prod_{i = 1}^\ell \big\langle \sigma_{a_i}(U), \sigma_{a_i}(U') \big\rangle $$
where $\ell = \ell_1 + \cdots + \ell_r$ and $(a_1,\dots,a_\ell)$ is such that $\big| \big\{ i \in [\ell] : a_i = j \big\} \big| = \ell_j$ for each $j \in [r]$. Now set
$$Z = \sigma_{a_1}(U) \otimes \sigma_{a_2}(U) \otimes \cdots \otimes \sigma_{a_\ell}(U) \quad \text{and} \quad Z' = \sigma_{a_1}(U') \otimes \sigma_{a_2}(U') \otimes \cdots \otimes \sigma_{a_\ell}(U'),$$ 
and note that 
$$\big\langle Z, Z' \big\rangle = \prod_{i = 1}^\ell \big\langle \sigma_{a_i}(U), \sigma_{a_i}(U') \big\rangle,$$ 
since $\langle u_1 \otimes \cdots \otimes u_r, v_1 \otimes \cdots \otimes v_r \rangle = \langle u_1, v_1 \rangle  \cdots \langle v_r , u_r \rangle $ for any vectors $u_i,v_i \in \R^n$. Finally, note that $Z$ and $Z'$ are independent and identically distributed random vectors, and therefore
$$\Ex \big[ \langle Z, Z' \rangle \big] = \Ex_{Z'} \big[ \Ex_Z \big[ \langle Z, Z' \rangle \big] \big] = 
\Ex_{Z'} \big[ \big\langle \Ex[Z], Z' \big\rangle \big] = \big\langle \Ex[Z], \Ex[Z'] \big\rangle \ge 0,$$ 
as required, where the final inequality holds because $\Ex[Z] = \Ex[Z']$. 
\end{proof}

Next, we will need the following special function: 
\begin{equation}\label{def:f}
f(x_1,\dots,x_r) = \sum_{j = 1}^r x_j \prod_{i \ne j} \big( 2 + \cosh\sqrt{x_i} \big),
\end{equation}
where we define $\cosh \sqrt{x}$ via its Taylor expansion
$$\cosh\sqrt{x} = \sum_{n = 0}^\infty \frac{x^n}{(2n)!}.$$ 
In particular, note that all of the coefficients of the Taylor expansion of $f$ are non-negative. The function $f$ also satisfies the following two inequalities. 

\begin{lemma}\label{lem:special:function}
Let $r \in \N$. The function $f \colon \R^r \to \R$ defined in~\eqref{def:f} satisfies
$$
f(x_1,\dots,x_r) \le \left\{\begin{array}{cl}
3^r r \exp\bigg( \displaystyle\sum_{i = 1}^r \sqrt{ x_i + 3r } \bigg) \quad & \text{if } \,\, x_i \ge - 3r \,\text{ for all }\, i \in [r];\\[+3ex]
-1 & \text{otherwise.} 
\end{array} \right.
$$
\end{lemma}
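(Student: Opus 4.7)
The plan is to factor
$$f(x_1,\dots,x_r) = P(\vec{x}) \cdot S(\vec{x}), \qquad P(\vec{x}) := \prod_{i=1}^r \big( 2 + \cosh\sqrt{x_i} \big), \qquad S(\vec{x}) := \sum_{j=1}^r \frac{x_j}{2 + \cosh\sqrt{x_j}},$$
and to exploit the uniform positivity $2 + \cosh\sqrt{x_i} \ge 1$, which holds with no sign restriction on $x_i$ since $\cosh\sqrt{x} = \cos\sqrt{-x} \ge -1$ for $x < 0$ and $\cosh\sqrt{x} \ge 1$ for $x \ge 0$. In particular $P(\vec{x}) \ge 1$ always.

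For the first bound, I will estimate each summand $x_j \prod_{i \ne j}(2 + \cosh\sqrt{x_i})$ separately by $3^r \exp\!\bigl(\sum_i \sqrt{x_i+3r}\bigr)$; summing over $j\in[r]$ then supplies the factor $r$. For each inner factor $2 + \cosh\sqrt{x_i}$ with $x_i \ge -3r$, I split on sign: when $x_i \ge 0$ I use $\cosh y \le e^y$ to obtain $2 + \cosh\sqrt{x_i} \le 3 e^{\sqrt{x_i}} \le 3 e^{\sqrt{x_i + 3r}}$; when $-3r \le x_i < 0$ I simply use $2+\cosh\sqrt{x_i} \le 3 \le 3 e^{\sqrt{x_i+3r}}$. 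For the outer factor I need $x_j \le 3 e^{\sqrt{x_j+3r}}$, which is trivial for $x_j \le 0$ and for $x_j > 0$ follows from $e^{\sqrt{x_j}} \ge x_j/2$ (an instance of $e^y \ge y^2/2$). Multiplying these bounds gives the per-summand estimate, and negative summands $x_j$ only strengthen the inequality, so no case split on $\operatorname{sgn}(x_j)$ is needed when summing.

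For the second bound, assume some $x_k < -3r$. Since $P(\vec{x}) \ge 1$ it suffices to show $S(\vec{x}) \le -1$: then $f = PS \le S \le -1$ because $S < 0$. To bound $S$ I prove two pointwise inequalities for $g(x) := x/(2 + \cosh\sqrt{x})$:
$$g(x) \le 1 \quad \text{for every } x \in \R, \qquad g(x) \le \tfrac{x}{3} \quad \text{for } x \le 0.$$
The first is trivial for $x \le 0$ and for $x \ge 0$ reduces to $2 + \cosh\sqrt{x} \ge x$; truncating the Taylor series (legitimate because all its coefficients are nonnegative) gives $\cosh\sqrt{x} \ge 1 + x/2 + x^2/24$, so the inequality reduces to $x^2/24 - x/2 + 3 \ge 0$, which holds since the discriminant is negative. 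The second follows from $2 + \cosh\sqrt{x} \le 3$ for $x \le 0$. Applying these to the sum, $g(x_k) \le x_k/3 < -r$ while $g(x_j) \le 1$ for each $j \ne k$, so $S(\vec{x}) < -r + (r-1) = -1$, as required.

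I expect no substantive obstacle; the only mildly delicate point is the uniform bound $g(x) \le 1$ on $[0,\infty)$, since the simpler inequality $\cosh\sqrt{x} \ge 1 + x/2$ only delivers $g(x) \le 1$ for $x \le 6$, and one must retain the quadratic term of the Taylor series to push through all of $x \ge 0$.
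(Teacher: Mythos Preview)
Your proof is correct and follows essentially the same route as the paper's: the same factorisation $f = P \cdot S$, the same use of $2+\cosh\sqrt{x}\ge 1$ (via $\cosh\sqrt{x}=\cos\sqrt{-x}$ for $x<0$), and the identical argument for the second case. The only cosmetic difference is that for the first bound the paper bounds $S\le r$ and $P\le 3^r\exp(\sum_i\sqrt{x_i+3r})$ and multiplies, whereas you bound each summand $x_j\prod_{i\ne j}(2+\cosh\sqrt{x_i})$ individually; these are equivalent once one has $x_j\le 2+\cosh\sqrt{x_j}$, which you in fact prove more carefully than the paper does (the paper simply asserts $x\le 2+\cosh\sqrt{x}$ for $x\ge 0$).
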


\begin{proof}
Observe first that $x \le 2 + \cosh\sqrt{x} \le 3e^{\sqrt{x}}$ for every $x \ge 0$, 
that 
$$f(x_1,\dots,x_r) = \bigg( \prod_{i = 1}^r \big( 2 + \cosh \sqrt{x_i} \big) \bigg) \sum_{j = 1}^r \frac{x_j}{2+\cosh\sqrt{x_j}},$$
and that $\cosh \sqrt{x} = \cos \sqrt{-x}$, and hence $-1 \le \cosh \sqrt{x} \le 1$ for all $x < 0$. It follows that
\begin{equation}\label{eq:cosh:bounds}
1 \le 2 + \cosh\sqrt{x} \le 3e^{\sqrt{x+3r}} \qquad \text{and} \qquad \frac{x}{2+\cosh\sqrt{x}} \le 1
\end{equation}
for all $x \ge -3r$, and hence
$$f(x_1,\dots,x_r)\le r \prod_{i=1}^r 3e^{\sqrt{x_i + 3r}} = 3^r r \exp\bigg(\sum_{i=1}^r \sqrt{x_i + 3r} \bigg),$$
for every $x_1,\ldots,x_r \ge -3r$, as claimed. Moreover, if $x_i \le -3r$, then, again using~\eqref{eq:cosh:bounds}, 
$$\sum_{j = 1}^r \frac{x_j}{2+\cosh\sqrt{x_j}} \le \frac{x_i}{3} + r - 1 \le - 1.$$
Since $2 + \cosh \sqrt{x} \ge 1$ for every $x \in \R$, it follows that $f(x_1,\dots,x_r) \le -1$, as required. 
\end{proof}

\pagebreak

\begin{remark}
The key idea in the lemma above was to find an entire function, $2 + \cosh\sqrt{x}$, which $(a)$ has a Taylor expansion at $x = 0$ with non-negative coefficients; $(b)$ is bounded on the negative real axis; and $(c)$ does not grow too quickly on the positive axis. The Phragm\'en--Lindel\"of theorem 
implies that functions satisfying $(a)$ and $(b)$ must grow at least as fast as $\exp\big( x^{1/2+o(1)} \big)$ on the positive real axis.  Thus the bound on the growth of $f$ given by the lemma is essentially best possible for constructions of this type.
\end{remark}

We are now ready to prove our geometric lemma. 

\begin{proof}[Proof of Lemma~\ref{lem:lambda}]
For each $i \in [r]$, define $Z_i = 3r\big\langle \sigma_i(U),\sigma_i(U') \big\rangle$, and observe that, by Lemma~\ref{lem:moments} and linearity of expectation, we have
\begin{equation}\label{eq:Ex:f:positive} 
\Ex\big[ f\big( Z_1,\ldots,Z_r \big) \big] \ge 0,
\end{equation}
since all of the coefficients of the Taylor expansion of $f$ are non-negative. By Lemma~\ref{lem:special:function}, it follows that if we define $E$ to be the event that $Z_i \ge -3r$ 
for every $i \in [r]$, then 
\begin{equation}\label{eq:eventE:inequality} 
3^r r \cdot \Ex\bigg[ \exp\bigg( \sum_{i = 1}^r \sqrt{ Z_i + 3r } \bigg) \mathbf{1}_E \bigg] \ge 1 - \Pr(E),
\end{equation}
where $\mathbf{1}_E \in \{0,1\}$ denotes the indicator of the event $E$, since the left-hand side bounds the 
contribution to the expectation on the left-hand side of~\eqref{eq:Ex:f:positive} due to the event $E$, and the right-hand side bounds the 
contribution to the expectation due to the event $E^c$. 

The result now follows from a simple calculation. First, note that 
$$\Pr(E) = \Pr\Big( Z_i \ge - 3r \, \text{ for all } \, i \in [r] \Big),$$
so if $\Pr(E) \ge \beta$, then the claimed inequality holds with $\lambda = -1$. We claim that if $\Pr(E) \le \beta$, then there exists $\lambda \ge -1$ such that
\begin{equation}\label{eq:max:big:and:E}
 \Pr\Big( \big\{ M \ge \lambda \big\} \cap E \Big) \ge \beta r e^{-C\sqrt{\lambda + 1}},
\end{equation}
where $M = \max \big\{ \big\langle \sigma_i(U),\sigma_i(U') \big\rangle : i \in [r] \big\}$, 
and therefore (by the union bound) there exists an $i \in [r]$ as required. To see this, set $f(x) = e^{c \sqrt{x + 1}}$ with $c = \sqrt{3}r^{3/2}$, and observe that 
$$\Ex\big[ f(M) \mathbf{1}_E \big] \, \le \, \Pr(E) + \int_{-1}^\infty \Pr\Big( \big\{ M \ge \lambda \big\} \cap E \Big) \cdot f'(\lambda) \,\mathrm{d}\lambda$$
since $f(-1) = 1$ and if $E$ holds then $M \ge -1$. It follows that if $\Pr(E) \le \beta$ and~\eqref{eq:max:big:and:E} does not hold for any $\lambda \ge -1$, then 
\begin{align*}
\Ex\Big[ \exp\big( c \sqrt{M + 1} \big) \mathbf{1}_E \Big]
& \le \, \Pr(E) + \int_{-1}^\infty \Pr\Big( \big\{ M \ge \lambda \big\} \cap E \Big) \cdot \frac{c}{2\sqrt{\lambda + 1}} \cdot e^{c \sqrt{\lambda + 1}} \,\mathrm{d}\lambda\\
& \le \, \beta + \beta r \int_{-1}^\infty \frac{c}{2\sqrt{\lambda + 1}} \cdot e^{- \sqrt{\lambda + 1}} \,\mathrm{d}\lambda \, \le \, \beta (cr + 1),
\end{align*}
where 
in the second step we used our choice of $C = 4r^{3/2} \ge c + 1$, 
and in the final step we used the fact that $\int_0^\infty \frac{1}{2\sqrt{x}} e^{-\sqrt{x}} \, \mathrm{d}x = 1$. In particular, 
recalling that 
$\beta = 3^{-4r}$, it follows that the left-hand side of~\eqref{eq:eventE:inequality} is at most $3^r r \cdot 3^{-4r} \big( \sqrt{3}r^{5/2} + 1 \big) \le 1/2$, which contradicts our assumption that $\Pr(E) \le \beta$. Hence there exists $\lambda \ge -1$ such that~\eqref{eq:max:big:and:E} holds, as claimed.
\end{proof}

It is now straightforward to deduce Lemma~\ref{key:lemma} from Lemma~\ref{lem:lambda}. 


\begin{proof}[Proof of Lemma~\ref{key:lemma}] 
For each colour $i \in [r]$, define a function $\sigma_i \colon X \to \R^{Y_i}$ as follows: for each $x \in X$, choose a set $N'_i(x) \subset N_i(x) \cap Y_i$ of size exactly $p_i|Y_i|$, where $p_i = p_i(X,Y_i)$, and set
$$\sigma_i(x) = \frac{\id_{N'_i(x)} - p_i\id_{Y_i}}{\sqrt{\alpha_ip_i|Y_i|}},$$
where $\id_S \in \{0,1\}^{Y_i}$ denotes the indicator function of the set $S$. Note that, for any $x,y\in X$,
$$\big\langle \sigma_i(x),\sigma_i(y) \big\rangle \ge \lambda \quad \Leftrightarrow \quad |N'_i(x) \cap N'_i(y)|\ge \big( p_i + \lambda\alpha_i \big) p_i |Y_i|.$$
Now, by Lemma~\ref{lem:lambda}, there exists $\lambda \ge -1$ and a colour $\ell \in [r]$ such that
$$\Pr\Big( \big\langle \sigma_\ell(U),\sigma_\ell(U') \big\rangle \ge \lambda \, \text{ and } \, \big\langle \sigma_i(U), \sigma_i(U') \big\rangle \ge -1 \, \text{ for all } \, i \ne \ell \Big) \ge \beta e^{- C\sqrt{\lambda + 1}}.$$
where $U$, $U'$ are independent random variables distributed uniformly in the set~$X$. Hence there exists a vertex $x \in X$ and a set $X' \subset X$ such that, 
$$|X'| \ge \beta e^{- C \sqrt{\lambda + 1}} |X| \qquad \text{and} \qquad |N'_\ell(x) \cap N'_\ell(y)| \ge \big( p_\ell + \lambda\alpha_\ell \big) p_\ell |Y_\ell |$$
for every $y \in X'$, and
$$|N'_i(x) \cap N'_i(y)|\ge \big( p_i - \alpha_i \big) p_i |Y_i|$$
for every $y \in X'$ and $i \in [r]$. Setting $Y'_i = N'_i(x)$ for each $i \in [r]$, it follows that
$$p_\ell\big( X', Y'_\ell \big) \ge p_\ell(X,Y_\ell) + \lambda \alpha_\ell \qquad \text{and} \qquad p_i\big( X', Y'_i \big) \ge p_i(X,Y_i) - \alpha_i$$
for every $i \in [r]$, as required. 
\end{proof}

\section{Proof of the Book Theorem}\label{sec:book:proof}

In this section we will use the multicolour book algorithm to prove Theorem~\ref{thm:book}. To do so, we will first prove a few simple lemmas about the sets produced by the algorithm when applied with arbitrary inputs, and then apply these bounds to our setting. Recall that we are given an $r$-colouring $\chi$ of $E(K_n)$ and 
sets $X,Y_1,\ldots,Y_r \subset V(K_n)$. Define 
\begin{equation}\label{def:p0}
p_0 = \min\big\{ p_i(X,Y_i) : i \in [r] \big\},
\end{equation}
and run the algorithm for some $t \in \N$, $\lambda_0 \ge 0$ and $\delta > 0$.

In order to simplify the statements of the lemmas below, let us write $X(s)$ and $Y_i(s)$ for the sets $X$ and $Y_i$ after $s$ steps of the algorithm, and set 
$$p_i(s) = p_i\big( X(s), Y_i(s) \big) \qquad \text{and} \qquad \alpha_i(s) = \frac{p_i(s) - p_0 + \delta}{t}.$$ 
Let us also write $\ell(s) \in [r]$ and $\lambda(s) \ge -1$ for the colour and number given by the application of Lemma~\ref{key:lemma} to the sets $X(s)$ and $Y_1(s),\dots,Y_r(s)$, and define
$$\cB_i(s) = \big\{ 0 \le j < s \,:\, \ell(j) = i \, \text{ and } \, \lambda(j) > \lambda_0 \big\},$$
for each $i \in [r]$ and $s \in \N$, to be the set of density-boost steps in colour $i$ during the first $s$ steps of the algorithm. We begin by noting the following lower bound on $p_i(s)$. 

\begin{lemma}\label{lem:pi:lower:bound}
For each $i \in [r]$ and $s \in \N$, 
\begin{equation}\label{eq:pi:lower:bound}
p_i(s) - p_0 + \delta \, \ge \, \delta \cdot \bigg( 1 - \frac{1}{t} \bigg)^{t} \prod_{j \in \cB_i(s)} \bigg( 1 + \frac{\lambda(j)}{t} \bigg).
\end{equation}
\end{lemma}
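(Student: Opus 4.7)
The plan is to prove the inequality by induction on $s$, tracking the evolution of the shifted quantity $q_i(s) := p_i(s) - p_0 + \delta$. Since $\alpha_i(s) = q_i(s)/t$ by definition, the conclusions of Lemma~\ref{key:lemma} translate into purely multiplicative bounds on $q_i$ at each step, and this is precisely what produces the product structure on the right-hand side of~\eqref{eq:pi:lower:bound}.

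The crux is a case analysis on the four possibilities at step $s$. If step $s$ is a colour step with chosen colour $j \ne i$, then $Y_i$ does not change and $X$ is replaced by a subset $X''$; since $p_i(A, Y_i) \ge p_i(X, Y_i)$ for any $A \subset X$, we have $q_i(s+1) \ge q_i(s)$. If instead $j = i$, then both $Y_i$ and $X$ change: $Y_i$ is replaced by $Y_i'$ and $X$ by $X'' \subset X'$, so combining $p_i(X'', Y_i') \ge p_i(X', Y_i')$ with the second inequality of~\eqref{eq:key:alli} and the identity $\alpha_i(s) = q_i(s)/t$ yields $q_i(s+1) \ge q_i(s)(1 - 1/t)$. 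The density-boost step splits analogously: if $\ell(s) \ne i$ then $q_i$ does not decrease, while if $\ell(s) = i$ the second inequality of~\eqref{eq:key:ell} gives $q_i(s+1) \ge q_i(s)(1 + \lambda(s)/t)$, and simultaneously $s$ is added to $\cB_i$.

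Iterating these bounds from $s = 0$ gives
$$q_i(s) \,\ge\, q_i(0) \cdot \bigg( 1 - \frac{1}{t} \bigg)^{c_i(s)} \prod_{j \in \cB_i(s)} \bigg( 1 + \frac{\lambda(j)}{t} \bigg),$$
where $c_i(s)$ denotes the number of colour steps with chosen colour $i$ during the first $s$ steps. To conclude, I would observe that $q_i(0) = p_i(0) - p_0 + \delta \ge \delta$ by the definition of $p_0$, and that the algorithm halts the moment some $|T_j|$ reaches $t$, so throughout the algorithm $c_i(s) = |T_i| \le t$; since $1 - 1/t < 1$, this gives $(1 - 1/t)^{c_i(s)} \ge (1 - 1/t)^t$, and the claimed bound follows.

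There is really no single hard step here: the content is careful bookkeeping combined with the fact that $\alpha_i$ was engineered so that the additive bounds of Lemma~\ref{key:lemma} on $p_i$ become multiplicative on $q_i$. The main source of potential error is keeping track of which of $X$, $Y_i$, and $T_i$ is actually updated in each of the four sub-cases, and in particular remembering to use monotonicity of $p_i$ in its first argument during the two \emph{spectator} cases in which $Y_i$ is left alone.
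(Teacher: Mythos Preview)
Your proof is correct and follows essentially the same approach as the paper's: introduce the shifted quantity $q_i(s) = p_i(s) - p_0 + \delta$, observe that the additive bounds from Lemma~\ref{key:lemma} become multiplicative bounds on $q_i$ via $\alpha_i(s) = q_i(s)/t$, and then iterate, using $q_i(0) \ge \delta$ and $c_i(s) \le t$. The only cosmetic difference is that the paper collapses your two ``spectator'' sub-cases into the single observation that $p_i(s+1) \ge p_i(s)$ whenever $Y_i(s+1) = Y_i(s)$.
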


\begin{proof}
Note first that if $Y_i(s+1) = Y_i(s)$, then $p_i(s+1) \ge p_i(s)$, since the minimum degree does not decrease when we take a subset of $X(s)$. When we perform a colour step in colour $i$, 
we have $p_i(s+1) \ge p_i(s) - \alpha_i(s)$, by Lemma~\ref{key:lemma}, and hence
$$p_i(s+1) - p_0 + \delta \ge \bigg( 1 - \frac{1}{t} \bigg) \big( p_i(s) - p_0 + \delta \big),$$
by our choice of $\alpha_i(s)$. Similarly, when we perform a density-boost step in colour $i$  
we have $p_i(s+1) \ge p_i(s) + \lambda(s) \alpha_i(s)$, by Lemma~\ref{key:lemma}, and hence
$$p_i(s+1) - p_0 + \delta \ge \bigg( 1 + \frac{\lambda(s)}{t} \bigg) \big( p_i(s) - p_0 + \delta \big).$$
Recalling that there are at most $t$ colour steps in colour $i$, and that $p_i(0) \ge p_0$, by the definition~\eqref{def:p0} of $p_0$, the claimed bound follows. 
\end{proof}

Before continuing, let's note a couple of important consequences of Lemma~\ref{lem:pi:lower:bound}. First, it implies that neither $p_i(s)$ nor $\alpha_i(s)$ can get too small. 

\begin{lemma}\label{lem:pi:min}
If\/ $t \ge 2$, then 
$$p_i(s) \, \ge \, p_0 - \frac{3\delta}{4} \qquad \text{and} \qquad \alpha_i(s) \, \ge \, \frac{\delta}{4t}$$
for every $i \in [r]$ and $s \in \N$. 
\end{lemma}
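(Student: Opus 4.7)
The plan is to read both inequalities directly off Lemma~\ref{lem:pi:lower:bound} by controlling the two factors on the right-hand side of \eqref{eq:pi:lower:bound} separately. The first factor, $(1-1/t)^t$, is an increasing function of $t$ that equals $1/4$ at $t=2$, so for every $t \ge 2$ it is bounded below by $1/4$. For the second factor, namely the product $\prod_{j \in \cB_i(s)}(1 + \lambda(j)/t)$, I would use that every index $j \in \cB_i(s)$ corresponds to a density-boost step, so $\lambda(j) > \lambda_0$; in every application of the algorithm the threshold $\lambda_0$ is taken to be non-negative (as noted in the discussion after the algorithm, $\lambda_0$ will be polynomial in $r$), and hence each factor is at least $1$ and the whole product is at least $1$.

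Combining these two bounds with Lemma~\ref{lem:pi:lower:bound} gives
$$p_i(s) - p_0 + \delta \,\ge\, \delta \cdot \frac{1}{4} \cdot 1 \,=\, \frac{\delta}{4},$$
which rearranges to the first claimed inequality $p_i(s) \ge p_0 - 3\delta/4$. The second claim is then immediate from the definition of $\alpha_i(s)$:
$$\alpha_i(s) \,=\, \frac{p_i(s) - p_0 + \delta}{t} \,\ge\, \frac{\delta/4}{t} \,=\, \frac{\delta}{4t}.$$

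I do not anticipate any serious obstacle here: the argument is just bookkeeping on top of Lemma~\ref{lem:pi:lower:bound}, and the only point requiring a little care is the implicit convention $\lambda_0 \ge 0$, which is what makes every factor in the density-boost product at least $1$ (without this, a density-boost step with $\lambda(j) \in (\lambda_0, 0)$ could contribute a factor slightly less than $1$, and the argument would need a more careful treatment of such steps).
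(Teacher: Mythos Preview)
Your proposal is correct and matches the paper's approach exactly: the paper's entire proof reads ``Both bounds follow immediately from~\eqref{eq:pi:lower:bound} and the definition of $\alpha_i(s)$,'' which is precisely the computation you spell out. Your observation about needing $\lambda_0 \ge 0$ is well taken---the paper's one-line proof tacitly relies on this too, and indeed in the only application (the proof of Theorem~\ref{thm:book}) one takes $\lambda_0 = \big(\mu\log(1/\delta)/(8C)\big)^2 > 0$.
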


\begin{proof}
Both bounds follow immediately from~\eqref{eq:pi:lower:bound} and the definition of $\alpha_i(s)$, since we chose $\lambda_0 \ge 0$ and using the fact that $( 1 - 1/t )^t \ge 1/4$ for every $t \ge 2$. 
\end{proof}

It also implies the following bound on the number of density-boost steps. 

\begin{lemma}\label{lem:Bi:max}
If\/ $t \ge \lambda_0 \ge 2$ and\/ $\delta \le 1/4$, then
$$|\cB_i(s)| \le \frac{4 \log(1/\delta)}{\lambda_0} \cdot t$$
for every $i \in [r]$ and $s \in \N$. 
\end{lemma}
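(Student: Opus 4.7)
The plan is to combine the lower bound on $p_i(s) - p_0 + \delta$ provided by Lemma~\ref{lem:pi:lower:bound} with the trivial upper bound coming from $p_i(s) \le 1$.

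First, since $p_i(s) \le 1$ and $p_0 \ge 0$, we have $p_i(s) - p_0 + \delta \le 1 + \delta \le 5/4$. Substituting this into~\eqref{eq:pi:lower:bound}, and using the elementary estimate $(1-1/t)^t \ge 1/4$ (which holds for all $t \ge 2$, a regime in which the statement is non-trivial — for $t = 1$ the factor $(1-1/t)^t$ collapses to $0$ and the lemma must be understood as vacuous), we obtain
$$\prod_{j \in \cB_i(s)} \bigg( 1 + \frac{\lambda(j)}{t} \bigg) \,\le\, \frac{5}{\delta}.$$
By the defining property of $\cB_i(s)$, each factor is at least $1 + \lambda_0/t$, which yields $\big( 1 + \lambda_0/t \big)^{|\cB_i(s)|} \le 5/\delta$.

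Taking logarithms and invoking the hypothesis $t \ge \lambda_0$, which guarantees $\lambda_0/t \in [0,1]$, we may apply the concavity estimate $\log(1+x) \ge (\log 2)\, x$ valid on $[0,1]$ (this follows from monotonicity of $\log(1+x)/x$ and evaluating at $x = 1$), to obtain
$$|\cB_i(s)| \,\le\, \frac{\log(5/\delta)}{\log(1+\lambda_0/t)} \,\le\, \frac{t \log_2(5/\delta)}{\lambda_0}.$$

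The only remaining task is a short constant-tracking calculation. Using $\delta \le 1/4$, so that $\log_2(1/\delta) \ge 2 \ge (\log_2 5)/2$, one checks that $\log_2(5/\delta) = \log_2 5 + \log_2(1/\delta) \le 4 \log(1/\delta)$ after converting to natural logarithms; this gives the claimed bound $|\cB_i(s)| \le \frac{4\log(1/\delta)}{\lambda_0}\cdot t$. The main (and really only) obstacle is this constant-bookkeeping at the end: the hypothesis $\delta \le 1/4$ is just barely tight enough, and the naive estimates $\log(1+x) \ge x/2$ together with $\log(5/\delta) \le 2\log(1/\delta)$ (which would need $\delta \le 1/5$) fall slightly short, so the sharper concavity bound on $[0,1]$ is essential.
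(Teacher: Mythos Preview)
Your proof is correct and follows the same route as the paper's: combine Lemma~\ref{lem:pi:lower:bound} with $p_i(s)\le 1$ and $(1-1/t)^t\ge 1/4$ to get $(1+\lambda_0/t)^{|\cB_i(s)|}\le 5/\delta$, then take logarithms using $t\ge\lambda_0$. Your explicit discussion of the constants (in particular the observation that the cruder pair $\log(1+x)\ge x/2$ and $\log(5/\delta)\le 2\log(1/\delta)$ just fails at $\delta=1/4$) is a welcome addition; the only quibble is that the chain $\log_2(1/\delta)\ge 2\ge(\log_2 5)/2$ you give as a hint only yields $\log_2(5/\delta)\le 3\log_2(1/\delta)\approx 4.33\log(1/\delta)$, so the cleanest justification of the final inequality is simply to check the extremal case $\delta=1/4$, where $\log_2 20\approx 4.32\le 4\log 4\approx 5.55$.
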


\begin{proof}
Since $\lambda(j) > \lambda_0$ for every $j \in \cB_i(s)$, and $p_i(s) \le 1$, it follows from~\eqref{eq:pi:lower:bound} that
$$\frac{\delta}{4} \bigg( 1 + \frac{\lambda_0}{t} \bigg)^{|\cB_i(s)|} \le 1 + \delta.$$
Since $t \ge \lambda_0$ and $\delta \le 1/4$, the claimed bound follows. 
\end{proof}

Lemmas~\ref{lem:pi:min} and~\ref{lem:Bi:max} together provide a lower bound on the size of the set $Y_i(s)$. 

\begin{lemma}\label{lem:Y:lower:bound}
If\/ $t \ge 2$, then
$$|Y_i(s)| \ge \bigg( p_0 - \frac{3\delta}{4} \bigg)^{t + |\cB_i(s)|} |Y_i(0)|$$
for every $i \in [r]$ and $s \in \N$. 
\end{lemma}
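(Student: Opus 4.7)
The plan is to track exactly when the set $Y_i$ is modified during the algorithm, bound the multiplicative loss at each such modification, and then count how many modifications can occur before step $s$.

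Inspecting the algorithm, the set $Y_i$ only changes in two situations: either during a colour step in which the chosen colour $j$ equals $i$ (so $Y_i \to Y_i'$), or during a density-boost step in which $\ell(s) = i$ (again $Y_i \to Y_i'$). In all other rounds, $Y_i$ remains unchanged. By the conclusion of Lemma~\ref{key:lemma}, the new set satisfies $|Y_i'| = p_i(X,Y_i) \cdot |Y_i|$, so each such modification at step $s$ multiplies $|Y_i|$ by exactly $p_i(s) \geq p_0 - 3\delta/4$, where the inequality is the bound from Lemma~\ref{lem:pi:min} (valid for $t \geq 2$).

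It remains only to count the modifications. Colour steps in colour $i$ are in bijection with the additions to $T_i$, and the algorithm halts as soon as $\max_j |T_j| = t$, so the total number of colour steps in colour $i$ that occur during the first $s$ rounds is at most $t$. Density-boost steps in colour $i$ during the first $s$ rounds are, by definition, counted by $|\cB_i(s)|$. Hence $Y_i$ is updated at most $t + |\cB_i(s)|$ times during the first $s$ steps, and each update multiplies its size by at least $p_0 - 3\delta/4$. Multiplying these bounds together yields
$$|Y_i(s)| \geq \bigg( p_0 - \frac{3\delta}{4} \bigg)^{t + |\cB_i(s)|} |Y_i(0)|,$$
as required.

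There is essentially no obstacle here: the lemma is a direct bookkeeping consequence of Lemma~\ref{lem:pi:min} (which controls the multiplicative factor) together with the observation from the algorithm description that at most one of the $Y_i$ is updated per round, with the type of update determined by whether we are in a colour step or a density-boost step. The only subtlety is making sure we are using the right lower bound on $p_i$ at the moment of update — and Lemma~\ref{lem:pi:min} gives the uniform bound $p_i(s) \geq p_0 - 3\delta/4$ valid at every step, which is exactly what we need.
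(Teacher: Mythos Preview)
Your proof is correct and follows essentially the same approach as the paper: track when $Y_i$ changes, use~\eqref{eq:key:alli} to get $|Y_i'| = p_i(s)|Y_i|$ at each such step, bound $p_i(s)$ below via Lemma~\ref{lem:pi:min}, and count the updates as at most $t + |\cB_i(s)|$. The paper's proof is just a terser version of exactly this argument.
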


\begin{proof}
Note that $Y_i(j+1) \ne Y_i(j)$ for at most $t + |\cB_i(s)|$ of the first $s$ steps, and for those steps we have
$$|Y_i(j+1)| = p_i(j) |Y_i(j)| \ge \bigg( p_0 - \frac{3\delta}{4} \bigg) |Y_i(j)|,$$ 
by~\eqref{eq:key:alli} and Lemma~\ref{lem:pi:min}.
\end{proof}

Finally, we need to bound the size of the set $X(s)$. To do so, set $\eps = (\beta / r) e^{- C \sqrt{\lambda_0 + 1}}$, and define $\cB(s) = \cB_1(s) \cup \cdots \cup \cB_r(s)$ to be the set of all density-boost steps. 

\begin{lemma}\label{lem:X:lower:bound}
For each $s \in \N$, 
\begin{equation}\label{eq:X:lower:bound}
|X(s)| \ge \eps^{rt + |\cB(s)|} \exp\bigg( - C \sum_{j \in \cB(s)} \sqrt{\lambda(j)+1}\,\, \bigg) |X(0)| - rt.
\end{equation}
\end{lemma}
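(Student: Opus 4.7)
The plan is to track $|X(s)|$ through a per-step recurrence. First I would quantify the change in $|X|$ in each type of step. In a colour step at time $j$, we have $\lambda(j) \le \lambda_0$, so the set $X'$ produced by Lemma~\ref{key:lemma} satisfies $|X'| \ge \beta e^{-C\sqrt{\lambda_0+1}}|X(j)| = r\eps\,|X(j)|$, and the update then replaces $X(j)$ by $N_j(x) \cap X'$, which has at least $(|X'|-1)/r$ elements, giving $|X(j+1)| \ge \eps\,|X(j)| - 1/r$. In a density-boost step at time $j$, the update is simply $X(j) \to X'$, so $|X(j+1)| \ge \beta e^{-C\sqrt{\lambda(j)+1}}|X(j)|$ directly from Lemma~\ref{key:lemma}. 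Writing this uniformly as $|X(j+1)| \ge c_j |X(j)| - e_j$, with $e_j = 1/r$ on colour steps and $e_j = 0$ otherwise, a straightforward induction yields
\[ |X(s)| \ge \bigg( \prod_{j=0}^{s-1} c_j \bigg) |X(0)| - \sum_{j \,:\, \text{colour step}} \frac{1}{r} \prod_{j' > j} c_{j'}. \]

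Next I would bound the two terms. Let $k$ denote the number of colour steps among the first $s$, so that $k + |\cB(s)| = s$. Since each colour step adds exactly one vertex to some $T_i$ and the algorithm stops as soon as $\max_i |T_i| = t$, we have $k \le rt$. Recalling $\eps = (\beta/r) e^{-C\sqrt{\lambda_0+1}} \le \beta < 1$, the main product is
\[ \prod_{j=0}^{s-1} c_j = \eps^k \beta^{|\cB(s)|} \exp\bigg( - C \sum_{j \in \cB(s)} \sqrt{\lambda(j)+1} \bigg) \ge \eps^{rt + |\cB(s)|} \exp\bigg( - C \sum_{j \in \cB(s)} \sqrt{\lambda(j)+1} \bigg), \]
where the inequality uses $\eps^k \ge \eps^{rt}$ and $\beta^{|\cB(s)|} \ge \eps^{|\cB(s)|}$. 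For the additive error, every $c_j \le 1$ (since $\beta < 1$ and $\eps < 1$), so each product $\prod_{j' > j} c_{j'}$ is at most $1$, and the total error is at most $k/r \le t \le rt$. Combining these estimates gives exactly~\eqref{eq:X:lower:bound}.

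The only real subtlety is the bookkeeping of the additive $1/r$ losses that accrue at each colour step. What saves us is that every shrinking factor $c_j$ lies in $(0,1)$, so the additive errors do not compound multiplicatively and each contributes at most $1/r$; summed over the $k \le rt$ colour steps this is far below the $rt$ budget in the claim. No other step is delicate: the key lemma gives both per-step bounds directly, and the only use of the assumption $\lambda(j) \le \lambda_0$ is to turn the $e^{-C\sqrt{\lambda(j)+1}}$ factor in a colour step into the uniform factor $e^{-C\sqrt{\lambda_0+1}}$ that defines $\eps$.
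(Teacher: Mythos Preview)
Your proof is correct and follows essentially the same approach as the paper: establish the per-step recurrences $|X(j+1)| \ge \eps\,|X(j)| - O(1)$ for colour steps and $|X(j+1)| \ge \beta e^{-C\sqrt{\lambda(j)+1}}|X(j)|$ for density-boost steps, then unfold using $k \le rt$ and $\beta \ge \eps$. Your write-up is in fact slightly more explicit than the paper's (you carry the $-1/r$ rather than $-1$ and spell out why the additive errors do not compound), but the argument is the same.
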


\begin{proof}
If $\lambda(j) \le \lambda_0$, then by~\eqref{eq:key:ell} and Step~2 of the algorithm we have
$$|X(j+1)| \ge \frac{\beta e^{- C \sqrt{\lambda_0 + 1}}}{r} \cdot |X(j)| - 1 = \eps |X(j)| - 1.$$ 
On the other hand, if $\lambda(j) > \lambda_0$, then $j \in \cB(s)$, and we have 
$$|X(j+1)| \ge \beta e^{- C \sqrt{\lambda(j) + 1}} |X(j)|,$$ 
by~\eqref{eq:key:ell} and Step~3 of the algorithm. Since there are at most $rt$ colour steps, and $\beta \ge \eps$, the claimed bound follows.
\end{proof}

We will use the following lemma to bound the right-hand side of~\eqref{eq:X:lower:bound}. 

\begin{lemma}\label{lem:sum:of:lambdas}
If\/ $t \ge \lambda_0 / \delta > 0$ and\/ $\delta \le 1/4$, then
$$\sum_{j \in \cB(s)} \sqrt{\lambda(j)} \le \frac{7r \log(1/\delta)}{\sqrt{\lambda_0}} \cdot t$$
for every $s \in \N$. 
\end{lemma}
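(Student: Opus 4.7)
The plan is to apply Lemma~\ref{lem:pi:lower:bound} colour by colour to obtain a log-product bound on the $\lambda(j)$'s, and then estimate $\sum_j \sqrt{\lambda(j)}$ by splitting $\cB_i(s)$ into steps with $\lambda(j) \le t$ (``small'') and $\lambda(j) > t$ (``large''). Fix a colour $i \in [r]$. Since $p_i(s) \le 1$ and $p_0 \ge 0$, Lemma~\ref{lem:pi:lower:bound} together with the elementary inequality $(1-1/t)^t \ge 1/4$ (valid for $t \ge 2$, and $t \ge \lambda_0/\delta \ge 4$ here) yields
\[
\prod_{j \in \cB_i(s)} \Big( 1 + \frac{\lambda(j)}{t} \Big) \, \le \, \frac{1+\delta}{\delta\,(1-1/t)^t} \, \le \, \frac{5}{\delta},
\]
and taking logarithms gives $\sum_{j \in \cB_i(s)} \log(1+\lambda(j)/t) \le \log(5/\delta)$.

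For the small part, I would use $\log(1+x) \ge x/2$ on $[0,1]$ to convert the log-sum into an upper bound $\sum \lambda(j) \le 2t\log(5/\delta)$, and then use $\sqrt{\lambda(j)} \le \lambda(j)/\sqrt{\lambda_0}$ (which holds because $\lambda(j) > \lambda_0$) to bound
\[
\sum_{j \in \cB_i(s),\, \lambda(j)\le t} \sqrt{\lambda(j)} \, \le \, \frac{1}{\sqrt{\lambda_0}} \sum \lambda(j) \, \le \, \frac{2t \log(5/\delta)}{\sqrt{\lambda_0}}.
\]
For the large part, each such step contributes at least $\log 2$ to the log-sum, so there are at most $\log(5/\delta)/\log 2$ of them; moreover each satisfies $1+\lambda(j)/t \le 5/\delta$, hence $\lambda(j) \le 5t/\delta \le 5t^2/\lambda_0$ using the hypothesis $t \ge \lambda_0/\delta$, giving $\sqrt{\lambda(j)} \le t\sqrt{5/\lambda_0}$. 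Combining the two estimates gives a per-colour bound of order $t\log(5/\delta)/\sqrt{\lambda_0}$, and summing over the $r$ colours yields the required inequality.

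The main obstacle is really just bookkeeping the constants to reach exactly $7r$: Step~1 naturally produces $\log(5/\delta)$ rather than $\log(1/\delta)$, and the small/large split naturally produces a constant around $5$ per colour. One then has to absorb the spare factor either by using $\log(5/\delta) \le C \log(1/\delta)$ for $\delta \le 1/4$ (losing a small constant that the stated $7$ leaves room for), or by sharpening $(1-1/t)^t \ge 1/4$ to $(1-1/t)^t \ge 1/e$ in the regime where $t$ is comfortably larger than $\lambda_0/\delta$. The genuinely crucial ingredients are the inequality $\sqrt{\lambda(j)} \le \lambda(j)/\sqrt{\lambda_0}$ (which is sharp precisely when $\lambda(j)$ hovers near $\lambda_0$, where the algorithm spends most of its density-boost budget) and the cap $\lambda(j) \le 5t/\delta$ combined with $t \ge \lambda_0/\delta$, which together tame the rare large-$\lambda(j)$ steps.
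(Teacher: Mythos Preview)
Your approach is essentially the same as the paper's: start from Lemma~\ref{lem:pi:lower:bound} to get
\[
\sum_{j\in\cB_i(s)}\log\Big(1+\frac{\lambda(j)}{t}\Big)\le \log\frac{5}{\delta},
\]
observe that $\lambda_0\le\lambda(j)\le 5t/\delta$, and convert the log-sum into a $\sqrt{\lambda}$-sum. The only substantive difference is organizational, and it costs you the constant.

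The paper does not split into small/large. Instead it bounds, for every $j\in\cB_i(s)$,
\[
\frac{\sqrt{\lambda(j)}}{\log(1+\lambda(j)/t)}\le\max\Big\{\frac{2t}{\sqrt{\lambda_0}},\ \sqrt{\tfrac{5t}{\delta}}\Big\}\le\frac{3t}{\sqrt{\lambda_0}},
\]
using $\log(1+x)\ge\min\{x/2,1\}$ and $t\ge\lambda_0/\delta$, and then sums directly against the log-bound. This yields $3\,t\log(5/\delta)/\sqrt{\lambda_0}$ per colour, and $3\log(5/\delta)\le 7\log(1/\delta)$ for $\delta\le 1/4$ (since $3\log 5\le 4\log 4$), giving exactly the stated $7r$.

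Your small/large split reproduces the same two regimes but handles the large one less efficiently: by first counting the large steps ($\le\log(5/\delta)/\log 2$) and then multiplying by the worst-case $\sqrt{\lambda(j)}\le t\sqrt{5/\lambda_0}$, you pick up an extra factor $1/\log 2\approx 1.44$. Your per-colour constant is therefore about $2+\sqrt{5}/\log 2\approx 5.2$ in front of $t\log(5/\delta)/\sqrt{\lambda_0}$, and after converting $\log(5/\delta)$ to $\log(1/\delta)$ (which costs a factor up to $1+\log 5/\log 4\approx 2.16$ when $\delta=1/4$) you end up around $11$, not $7$. So your claim that ``the stated $7$ leaves room'' is not correct: to reach $7$ you should drop the counting step and bound $\sqrt{\lambda(j)}$ against $\log(1+\lambda(j)/t)$ term by term, exactly as the paper does.
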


\begin{proof}
Observe first that, by Lemma~\ref{lem:pi:lower:bound}, we have 
\begin{equation}\label{eq:B:condition}
\frac{\delta}{4} \prod_{j \in \cB_i(s)} \bigg( 1 + \frac{\lambda(j)}{t} \bigg) \le 1 + \delta
\end{equation}
for each $i \in [r]$. Note also that $\lambda_0 \le \lambda(j) \le 5t/\delta$ for every $j \in \cB(s)$, where the lower bound holds by the definition of $\cB(s)$, and the upper bound by~\eqref{eq:B:condition} and since $\delta \le 1/4$. Now, since $\log(1+x) \ge \min\{ x/2,1\}$ for all $x > 0$, it follows that
\begin{equation}\label{eq:not:convexity}
\frac{ \sqrt{\lambda(j)} }{\log(1 + \lambda(j)/t) } \le \max\bigg\{ \frac{ 2t }{ \sqrt{\lambda_0} }, \, \sqrt{ \frac{5t}{\delta}} \bigg\} \le \frac{ 3t }{ \sqrt{\lambda_0} },
\end{equation}
where in the second inequality we used 
our assumption that $t \ge \lambda_0 / \delta$. It now follows that
$$\sum_{j \in \cB_i(s)} \sqrt{\lambda(j)} \le \frac{ 3t }{ \sqrt{\lambda_0} } \sum_{j \in \cB_i(s)} \log \bigg( 1 + \frac{\lambda(j)}{t} \bigg) \le \frac{7 \log(1/\delta)}{\sqrt{\lambda_0}} \cdot t,$$
where the first inequality holds by~\eqref{eq:not:convexity}, and the second follows from~\eqref{eq:B:condition}, since $\delta \le 1/4$. Summing over $i \in [r]$, we obtain the claimed bound. 
\end{proof}

We are finally ready to prove the book theorem. 

\begin{proof}[Proof of Theorem~\ref{thm:book}]
Recall that we are given an $r$-colouring $\chi$ of $E(K_n)$ and a collection of 
sets $X,Y_1,\ldots,Y_r \subset V(K_n)$ with
\begin{equation}\label{eq:book:thm:conditions}
|X| \ge \bigg( \frac{\mu^2}{p} \bigg)^{\mu r t} \qquad \text{and} \qquad |Y_i| \ge \bigg( \frac{e^{2^{13} r^3 / \mu^2}}{p} \bigg)^t \, m
\end{equation}
for each $i \in [r]$, for some $p \in (0,1]$, $\mu \ge 2^{10} r^3$ and $t,m \in \N$ with $t \ge \mu^5 / p^2$, and moreover
\begin{equation}\label{eq:book:thm:min:degree}
|N_i(x) \cap Y_i| \ge p|Y_i|
\end{equation}
for every $x \in X$ and $i \in [r]$. 
We will run the multicolour book algorithm with
\begin{equation}\label{def:delta:lambda0}
\delta = \frac{p}{\mu^2} \qquad \text{and} \qquad \lambda_0 = \bigg( \frac{\mu \log(1/\delta)}{8C} \bigg)^2,
\end{equation}
where $C = 4r^{3/2}$ (as before), and show that it ends with
$$\max \big\{ |T_i(s)| : i \in [r] \big\} = t \qquad \text{and} \qquad \min\big\{ |Y_i(s)| : i \in [r] \big\} \ge m,$$
and therefore produces a monochromatic $(t,m)$-book.

To do so, we just need to bound the sizes of the sets $X(s)$ and $Y_i(s)$ from below. Observe that $t \ge \lambda_0$ and $\delta \le 1/4$, and therefore, by Lemma~\ref{lem:Bi:max}, that
\begin{equation}\label{eq:Bi:final:bound}
|\cB_i(s)| \, \le \, \frac{4 \log(1/\delta)}{\lambda_0} \cdot t \, = \, \frac{2^{12} r^3}{\mu^2 \log(1/\delta)} \cdot t \, \le \, t
\end{equation}
for every $i \in [r]$ and $s \in \N$. Since $p_0 \ge p$, by~\eqref{eq:book:thm:min:degree} and the definition of $p_0$, it follows that  
$$|Y_i(s)| \, \ge \, \bigg( p - \frac{3\delta}{4} \bigg)^{t + |\cB_i(s)|} |Y_i| \, \ge \, e^{-2\delta t / p} \, p^{|\cB_i(s)|} \, \big( e^{2^{13} r^3 / \mu^2} \big)^t \, m,$$
where the first inequality holds by Lemma~\ref{lem:Y:lower:bound}, and the second by~\eqref{eq:book:thm:conditions} and~\eqref{eq:Bi:final:bound}. Noting that $p^{|\cB_i(s)|} \ge e^{-2^{12} r^3 t / \mu^2}$, by~\eqref{eq:Bi:final:bound}, and that $\delta / p = 1/\mu^2$, it follows that 
$$|Y_i(s)| \, \ge \, e^{-2 t / \mu^2} \big( e^{2^{12} r^3 / \mu^2} \big)^t \, m \, \ge \, m,$$
as claimed. To bound $|X(s)|$, recall~\eqref{eq:X:lower:bound} and observe that 
$$\eps^{rt + |\cB(s)|}  \ge \bigg( \frac{\beta}{r} \cdot e^{- C \sqrt{\lambda_0 + 1}} \bigg)^{2rt} \ge \big( e^{- 4C \sqrt{\lambda_0}} \big)^{rt} = \delta^{\mu r t/2} = \bigg( \frac{\mu^2}{p} \bigg)^{-\mu r t / 2},$$
where the first step holds because $\eps = (\beta / r) e^{- C \sqrt{\lambda_0 + 1}}$ and $|\cB(s)| \le rt$, by~\eqref{eq:Bi:final:bound}, the second step holds because $\beta = 3^{-4r}$ and $\lambda_0 \ge 2^{10} r^3$, and the third and fourth steps follow from our choice of $\lambda_0$ and $\delta$ in~\eqref{def:delta:lambda0}. 
Moreover, observe that $t \ge \lambda_0 / \delta$, and therefore
$$\sum_{j \in \cB(s)} \sqrt{\lambda(j)+1} \le \frac{8r \log(1/\delta)}{\sqrt{\lambda_0}} \cdot t = \frac{2^6Crt}{\mu},$$
by Lemma~\ref{lem:sum:of:lambdas} and our choice of $\lambda_0$, and therefore 
$$|X(s)| \ge \bigg( \frac{\mu^2}{p} \bigg)^{\mu r t / 2} \exp\bigg( - \frac{2^6C^2rt}{\mu} \bigg) - rt > 0,$$
for every $s \in \N$, by Lemma~\ref{lem:X:lower:bound} and since $\mu \ge 2^{10} r^3$ and $C = 4r^{3/2}$. It follows that the algorithm produces a monochromatic $(t,m)$-book, as claimed. 
\end{proof}

\section{Deducing the bound on $R_r(k)$}\label{sec:final:proof}

In this section we will deduce Theorem~\ref{thm:Ramsey:multicolour}, our bound on the Ramsey numbers $R_r(k)$, from Theorem~\ref{thm:book}. We will in fact prove the following quantitative version of the theorem.

\begin{theorem}\label{thm:Ramsey:multicolour:quant}
Let $r \ge 2$, and set $\delta = 2^{-160} r^{-12}$. Then
$$R_r(k) \le e^{-\delta k} r^{rk}$$ 
for every $k \in \N$ with $k \ge 2^{200} r^{20}$. 
\end{theorem}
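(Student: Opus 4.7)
The plan is to prove Theorem~\ref{thm:Ramsey:multicolour:quant} by contradiction. Suppose $n \ge e^{-\delta k} r^{rk}$ and that $\chi$ is an $r$-colouring of $E(K_n)$ containing no monochromatic $K_k$. Fix parameters $\eps = 2\sqrt{\delta/r}$, a book spine-size target $t = \lceil \sqrt{24 \delta}\, k \rceil$, and a choice of $\mu \ge 2^{10} r^3$ large enough that the inherent Theorem~\ref{thm:book} loss $e^{-2^{13} r^3 t / \mu^2}$ is negligible on the scale $e^{-t^2/k}$ (e.g.\ $\mu$ of order $r^{3/2}\sqrt{k/t}$).

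\emph{Stage 1: Erd\H{o}s--Szekeres preprocessing.} Starting from $V = V(K_n)$ and $T_1 = \cdots = T_r = \emptyset$, while there exist $v \in V$ and $i \in [r]$ with $|N_i(v) \cap V|/|V| \ge 1/r + \eps$, update $V \to N_i(v) \cap V$ and $T_i \to T_i \cup \{v\}$; halt after $\eps k$ steps or when no such pair exists. The invariant $V \subset \bigcap_{j,\,u \in T_j} N_j(u)$ is preserved, each $T_j$ remains a monochromatic clique in colour $j$, and $|V|$ shrinks by a factor of at least $1/r + \eps$ per step. If the algorithm runs the full $\eps k$ steps, then appending a pure Erd\H{o}s--Szekeres phase of length $rk - \eps k$ produces a monochromatic $K_k$, because
\begin{equation*}
n \cdot (1/r + \eps)^{\eps k} \cdot r^{-(rk - \eps k)} = n \cdot r^{-rk} \cdot (1 + r\eps)^{\eps k} \ge e^{-\delta k + r\eps^2 k / 2} \ge 1,
\end{equation*}
using $r \eps^2 = 4\delta$. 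Otherwise the algorithm halts with some $s < \eps k$; the stopping criterion together with $\sum_i |N_i(v) \cap V| = |V| - 1$ forces the regularity $|N_i(v) \cap V|/|V| \in [1/r - (r-1)\eps - o(1),\, 1/r + \eps]$ for every $v \in V$ and $i \in [r]$, and $|V| \ge n \cdot r^{-\eps k}$.

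\emph{Stage 2: Random partition and the book theorem.} I would then partition $V$ randomly, sending each vertex to $X$ with probability $1/2$ and to each $Y_i$ with probability $1/(2r)$; a Chernoff estimate followed by removing a tiny bad subset of $X$ yields $|X| \ge |V|/4$, $|Y_i| \ge |V|/(4r)$, and $p_i(X, Y_i) \ge p := 1/r - Cr\eps$ for every $i \in [r]$ and some absolute constant $C$. I then apply Theorem~\ref{thm:book} with these $X, Y_1, \ldots, Y_r$ and parameters $p$, $\mu$, $t$ as above. The hypotheses $t \ge \mu^5/p$, $|X| \ge (\mu^2/p)^{\mu r t}$, and $|Y_i| \ge (e^{2^{13} r^3/\mu^2}/p)^t m$ follow from $|V| \ge n \cdot r^{-\eps k}$, $n \ge e^{-\delta k} r^{rk}$, and $k \ge 2^{160} r^{16}$ by direct calculation. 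The theorem outputs a monochromatic $(t, m)$-book in some colour $j$ with
\begin{equation*}
m \ge r^{-t} \cdot e^{-t^2/(8k)} \cdot n,
\end{equation*}
after absorbing the loss factors $r^{-\eps k}$ from preprocessing, $1/(4r)$ from partitioning, $(1 - Cr^2\eps)^t$ from the density gap, and $e^{-2^{13} r^3 t/\mu^2}$ from Theorem~\ref{thm:book} into the generous slack.

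\emph{Stage 3: Combining the book with the preprocessing cliques.} Let $T \subset X$ be the book spine and $B$ its pages, both in colour $j$. Since $T \cup B \subset V \subset \bigcap_{u \in T_j} N_j(u)$, the set $T \cup T_j$ is a monochromatic $K_{t + |T_j|}$ in colour $j$, and $B$ lies in its common colour-$j$ neighbourhood. The Erd\H{o}s--Szekeres multinomial estimate gives $R_r(k,\ldots,k,k-u) \le r^{rk-u} e^{-u^2/(6k)}$ for every $u \le k$; applying it with $u = t + |T_j|$, which satisfies $t \le u \le t + \eps k$, and comparing with $m$, the inequality $R_r(k,\ldots,k,k-u) \le m$ reduces to $(u - t)\log r + u^2/(6k) - t^2/(8k) \ge \delta k$, which holds since $u \ge t$ and $t^2/(24k) \ge \delta k$ by our choice of $t$. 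Hence $B$ contains either a monochromatic $K_k$ in a colour $\ne j$ or a monochromatic $K_{k - t - |T_j|}$ in colour $j$ which extends $T \cup T_j$ to a monochromatic $K_k$, a contradiction.

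\emph{Main obstacle.} The delicate part is Stage~2, where four error factors must all fit into the slack $e^{-t^2/(8k)}$, which is tighter than the $e^{-t^2/(6k)}$ coming from the $R_r$ estimate. The binding constraints are: (i) the density loss $e^{-O(r^2 \eps t)}$, requiring $r^2 \eps \ll t/k$, which forces $\eps = O(\sqrt{\delta/r})$ and hence $\delta$ polynomially small in $r$; (ii) the inherent loss $e^{-2^{13} r^3 t/\mu^2}$ from Theorem~\ref{thm:book}, requiring $\mu \gg r^{3/2} \sqrt{k/t} = r^{3/2} \delta^{-1/4}$; (iii) the reservoir condition $|X| \ge (\mu^2/p)^{\mu r t}$, requiring $\mu r t \log(\mu^2 r) = o(rk \log r)$. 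With $\delta = 2^{-160} r^{-12}$ and $k \ge 2^{160} r^{16}$ one can choose $\mu$ in the range polynomial in $r$ that satisfies all three simultaneously; the remaining verification is elementary but unpleasant constant-pushing.
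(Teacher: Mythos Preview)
Your three-stage outline (regularise via Erd\H{o}s--Szekeres, apply the book theorem, then apply the multinomial Ramsey bound inside the pages) is exactly the paper's strategy. The random partition is unnecessary --- the paper simply takes $X = Y_1 = \cdots = Y_r = W$ --- but that is harmless. The real problem is that your numerical choices are internally inconsistent, so Stage~2 does not go through.

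You set $t = \lceil\sqrt{24\delta}\,k\rceil$, the \emph{smallest} $t$ for which the Stage~3 comparison $t^2/(24k) \ge \delta k$ holds. This makes your entire slack for absorbing Stage~2 losses equal to $t^2/(8k) = 3\delta k$. But with $\eps = 2\sqrt{\delta/r}$ and a preprocessing cutoff of $\eps k$ steps, the preprocessing loss alone is $r^{-\eps k}$, i.e.\ a log-loss of $\eps k\log r = 2\sqrt{\delta/r}\,k\log r$, which for $\delta = 2^{-160}r^{-12}$ exceeds $3\delta k$ by a factor of roughly $2^{80} r^{5.5}\log r$; it is certainly not ``absorbed into the generous slack''. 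Your own constraint~(i), $r^2\eps \ll t/k$, is also violated: $r^2\eps = 2r^{3/2}\sqrt{\delta}$ while $t/k = \sqrt{24\delta}$, giving a ratio $\approx 0.4\,r^{3/2}$, not $o(1)$. Finally, the $\mu$ you need (of order $r^{3/2}\delta^{-1/4} \approx 2^{40}r^{4.5}$) makes $\mu^5/p \approx 2^{200}r^{23.5}$, so $t \ge \mu^5/p$ would force $k \gtrsim 2^{278}r^{30}$, far beyond the stated hypothesis $k \ge 2^{160}r^{16}$.

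The fix is the one the paper makes: take $t$ much larger than $\sqrt{\delta}\,k$ (the paper uses $t = 2^{-40}r^{-3}k$, so $t^2/k$ dwarfs $\delta k$), take the preprocessing cutoff to be $\eps^2 k$ rather than $\eps k$ (with $\eps = 2^{-50}r^{-4}$), and take $\mu = 2^{30}r^3$ fixed. Then the four loss terms really are lower-order, and the reservoir and $t \ge \mu^5/p$ conditions are compatible with $k \ge 2^{160}r^{16}$. A secondary point: in Stage~3 you only combine with $T_j$, but the pages $B$ lie in $\bigcap_i\bigcap_{u\in T_i}N_i(u)$, so one should compare against $R_r(k-|T_1|,\dots,k-|T_j|-t,\dots,k-|T_r|)$ to recoup the $r^{-s}$ from preprocessing; the paper sidesteps this by making $s \le \eps^2 k$ negligible to begin with.
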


In order to apply Theorem~\ref{thm:book}, we need to find a suitable collection of sets $X,Y_1,\ldots,Y_r$ in an arbitrary $r$-colouring of $E(K_n)$. To do so, we simply run the Erd\H{o}s--Szekeres process until we find a subset of the vertex set in which each colour has density close to $1/r$, and the colouring is close to regular in each colour. 

\begin{lemma}\label{lem:ESz:steps}
Given\/ $n,r \in \N$ and\/ $\eps > 0$, and an $r$-colouring of $E(K_n)$, the following holds. There exist disjoint sets of vertices\/ $S_1,\dots,S_r$ and\/ $W$ such that, for every $i \in [r]$, 
$$|W| \ge \bigg( \frac{1+\eps}{r} \bigg)^{\sum_{j = 1}^r |S_j|} n \qquad \text{and} \qquad |N_i(w) \cap W| \ge \bigg( \frac{1}{r} - \eps \bigg) |W| - 1$$ 
for every $w \in W$, and $(S_i,W)$ is a monochromatic book in colour~$i$.
\end{lemma}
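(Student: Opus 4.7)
The plan is to run a standard Erd\H{o}s--Szekeres-style greedy process: we iteratively build the books while shrinking the common neighbourhood $W$, stopping as soon as the colouring on $W$ becomes sufficiently regular. Initially, set $W := V(K_n)$ and $S_1 = \cdots = S_r := \emptyset$. At each step, test the stopping condition: if every $w \in W$ and every $i \in [r]$ satisfy $|N_i(w) \cap W| \ge (1/r - \eps)|W| - 1$, halt. Otherwise, pick a witness $(w,i)$ with $|N_i(w) \cap W| < (1/r-\eps)|W| - 1$, choose a colour $j \in [r]$ (to be produced by pigeonhole below) with $|N_j(w) \cap W| \ge ((1+\eps)/r)|W|$, and then update $S_j \to S_j \cup \{w\}$ and $W \to N_j(w) \cap W$.

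The key calculation is to verify that a suitable $j$ always exists: since $w \notin N_j(w)$ for any $j$, we have $\sum_{j' \in [r]} |N_{j'}(w) \cap W| = |W| - 1$, and therefore if $|N_i(w) \cap W| < (1/r - \eps)|W| - 1$, then
$$\sum_{j' \ne i} |N_{j'}(w) \cap W| \,>\, (1 - 1/r + \eps) |W|,$$
so some $j \ne i$ satisfies $|N_j(w) \cap W| \ge \frac{(1-1/r+\eps)|W|}{r-1} = \frac{|W|}{r} + \frac{\eps|W|}{r-1} \ge \frac{(1+\eps)|W|}{r}$.

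The remaining verifications are routine: (i) each $(S_i, W)$ remains a monochromatic book in colour $i$ throughout, because $W$ only shrinks (so the condition $W \subseteq N_i(u)$ for $u \in S_i$ is preserved), and for the newly added $w$ we have $W_{\text{new}} = N_j(w) \cap W_{\text{old}} \subseteq N_j(w)$; (ii) the sets $S_1,\ldots,S_r$ and $W$ stay disjoint, since each $w$ added to some $S_j$ is immediately removed from $W$ (as $w \notin N_j(w)$) and future additions are drawn from the updated $W$; (iii) the process terminates, because $|W|$ strictly decreases at each step; (iv) at each step $|W|$ multiplies by a factor of at least $(1+\eps)/r$, so after $s = \sum_{j} |S_j|$ steps (one per vertex added to some $S_j$) we get $|W| \ge ((1+\eps)/r)^s n$, giving the desired lower bound on $|W|$ at termination.

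There is no real obstacle; the only slightly delicate point is handling the additive $-1$ in the stopping condition, which arises because $w$ itself contributes nothing to the colour-degree sum. This is cleanly absorbed by the pigeonhole calculation above, where the extra $+1$ cancels the $-1$ from $|W|-1$. Everything else is bookkeeping.
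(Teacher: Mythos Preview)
Your proof is correct and essentially identical to the paper's, which phrases the same greedy process as an induction on $n$ with the same pigeonhole computation for the colour $j$. The one point you leave implicit---that $S_j$ stays a monochromatic clique when $w$ is added, as required for $(S_j,W)$ to be a book---follows at once from $w \in W_{\text{old}} \subseteq N_j(u)$ for all $u \in S_j^{\text{old}}$.
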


\begin{proof}
We use induction on~$n$. Note that if $n \le r$ or $r = 1$, then the sets $S_1 = \cdots = S_r = \emptyset$ and $W = V(K_n)$ have the required properties. We may therefore assume that $n > r \ge 2$, and that there exists a vertex $x \in V(K_n)$ and a colour $\ell \in [r]$ such that
$$|N_\ell(x)| < \bigg( \frac{1}{r} - \eps \bigg) n - 1,$$
and hence there exists a different colour $j \ne \ell$ such that 
$$|N_j(x)| \ge \frac{1}{r-1} \bigg( n - \bigg( \frac{1}{r} - \eps \bigg) n \bigg) \ge \bigg( \frac{1 + \eps}{r} \bigg) n.$$
Applying the induction hypothesis to the colouring induced by the set $N_j(x)$, we obtain sets $S'_1,\ldots,S'_r$ and $W'$ satisfying the conclusion of the lemma with $n$ replaced by $|N_j(x)|$. Setting 
$$W = W', \qquad S_j = S'_j \cup \{x\} \qquad \text{and} \qquad S_i = S'_i \qquad \text{for each } \, i \ne j,$$ 
we see that $W$ satisfies the minimum degree condition (by the induction hypothesis), and
$$|W| \ge \bigg( \frac{1+\eps}{r} \bigg)^{\sum_{i = 1}^r |S'_i|}|N_j(x)| \ge \bigg( \frac{1+\eps}{r} \bigg)^{\sum_{i=1}^r |S_i|}n.$$
Since $(S_i,W)$ is a monochromatic book in colour~$i$ for each $i \in [r]$, it follows that $S_1,\dots,S_r$ and $W$ are sets with the claimed properties.
\end{proof}

The next lemma, which is an immediate consequence of Erd\H{o}s and Szekeres' bound on the $r$-colour Ramsey numbers, implies (roughly speaking) that either the sets $S_1,\dots,S_r$ given by Lemma~\ref{lem:ESz:steps} satisfy $|S_i| = o(k)$ for each $i \in [r]$, or we are already done.

\begin{lemma}\label{lem:many:ESz:steps}
If\/ $k,r \ge 2$ and\/ $\eps \in (0,1)$, then 
$$R_r\big( k - s_1, \ldots, k - s_r \big) \le e^{-\eps^3 k / 2} \bigg( \frac{1+\eps}{r} \bigg)^s r^{rk}$$
for every $s_1,\ldots,s_r \in [k]$ with\/ $s = s_1 + \cdots + s_r \ge \eps^2 k$.
\end{lemma}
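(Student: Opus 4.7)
The plan is to combine the classical Erd\H{o}s--Szekeres multinomial bound on multicolour Ramsey numbers with the elementary logarithmic inequality $\log(1+\eps) \ge \eps/2$ valid for $\eps \in (0,1]$. First I would invoke the Erd\H{o}s--Szekeres bound, in the same form already used just after the statement of Theorem~\ref{thm:Ramsey:multicolour}:
$$R_r\big( k - s_1, \ldots, k - s_r \big) \le \binom{rk-s}{k-s_1, \ldots, k-s_r}.$$
I would then estimate this multinomial coefficient by the trivial bound $\binom{rk-s}{k-s_1, \ldots, k-s_r} \le r^{rk-s}$, which is immediate from the multinomial expansion $(1 + \cdots + 1)^{rk-s} = r^{rk-s}$ (the coefficient in question being one of the non-negative summands).

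The remaining step is purely algebraic. Rewriting the target right-hand side as
$$e^{-\eps^3 k / 2} \bigg( \frac{1+\eps}{r} \bigg)^s r^{rk} = e^{-\eps^3 k / 2} (1+\eps)^s \cdot r^{rk-s},$$
and cancelling the common factor $r^{rk-s}$, it suffices to establish $e^{\eps^3 k / 2} \le (1+\eps)^s$. Taking logarithms, this in turn reduces to $\eps^3 k / 2 \le s \log(1+\eps)$. Using the Taylor expansion $\log(1+\eps) \ge \eps - \eps^2/2 \ge \eps/2$ for $\eps \in (0,1]$, together with the hypothesis $s \ge \eps^2 k$, the right-hand side is at least $\eps^2 k \cdot \eps/2 = \eps^3 k / 2$, which closes the argument.

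There is no real obstacle: the hypotheses $\eps < 1$ and $s \ge \eps^2 k$ have been calibrated so that the multiplicative slack $\bigl((1+\eps)/r\bigr)^s$ relative to the extremal Erd\H{o}s--Szekeres case $s_i = 0$ is just enough to accommodate a genuine exponential saving of the form $e^{-\eps^3 k/2}$. Sharper estimates on the multinomial coefficient (such as the one behind the factor $e^{-t^2/6k}$ mentioned in Section~\ref{sec:book:thm}) would improve the exponent, but they are not needed for the current application.
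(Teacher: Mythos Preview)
Your proof is correct and follows essentially the same route as the paper: the paper's proof simply invokes the Erd\H{o}s--Szekeres bound $R_r(k-s_1,\ldots,k-s_r)\le r^{rk-s}$ and then notes that $(1+\eps)^{\eps^2 k}\ge e^{\eps^3 k/2}$ for $\eps\in(0,1)$, which is exactly the inequality you establish in more detail via $\log(1+\eps)\ge \eps/2$ and $s\ge \eps^2 k$.
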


\begin{proof}
By the Erd\H{o}s--Szekeres bound, we have
$$R_r\big( k - s_1, \ldots, k - s_r \big) \le r^{rk - s}.$$
The claimed bound follows, since $( 1 + \eps )^{\eps^2 k} \ge e^{\eps^3 k/2}$ for every $\eps \in (0,1)$.
\end{proof}

Our bound on $R_r(k)$ now follows from a simple calculation. 

\begin{proof}[Proof of Theorem~\ref{thm:Ramsey:multicolour:quant}]
Given $r \ge 2$, set $\delta = 2^{-160} r^{-12}$, let $k \ge 2^{200} r^{20}$ and $n \ge e^{-\delta k} r^{rk}$, and let $\chi$ be an arbitrary $r$-colouring of $E(K_n)$. Set $\eps = 2^{-50} r^{-4}$, and let $S_1,\ldots,S_r$ and $W$ be the sets given by Lemma~\ref{lem:ESz:steps}. Suppose first that $|S_1| + \cdots + |S_r| \ge \eps^2 k$, and observe that, by Lemma~\ref{lem:many:ESz:steps} and our lower bounds on $n$ and $|W|$, we have 
$$|W| \ge \bigg( \frac{1+\eps}{r} \bigg)^{\sum_{j = 1}^r |S_j|} e^{-\delta k} r^{rk} \ge R_r\big( k - |S_1|, \ldots, k - |S_r| \big).$$ 
Since $(S_i,W)$ is a monochromatic book in colour~$i$ for each $i \in [r]$, it follows that there exists a monochromatic copy of $K_k$ in $\chi$, as required.

We may therefore assume from now on that  $|S_1| + \cdots + |S_r| \le \eps^2 k$. In this case we set $X = Y_1 = \cdots = Y_r = W$, and apply Theorem~\ref{thm:book} to the colouring restricted to $W$ with 
$$p = \frac{1}{r} - 2\eps, \qquad \mu = 2^{30} r^3, \qquad t = 2^{-40} r^{-3} k \qquad \text{and} \qquad m = R\big( k, \ldots, k, k - t \big).$$
In order to apply Theorem~\ref{thm:book}, we need to check that $t \ge \mu^5 / p^2$, and that
$$|X| \ge \bigg( \frac{\mu^2}{p} \bigg)^{\mu r t} \qquad \text{and} \qquad |Y_i| \ge \bigg( \frac{e^{2^{13} r^3 / \mu^2}}{p} \bigg)^t \, m.$$
The first two of these inequalities are both easy: the first holds because $k \ge 2^{200} r^{20}$, and for the second note that since $n \ge r^{rk/2}$ and $\sum_{j = 1}^r |S_j| \le rk/4$, we have
$$|X| \ge \bigg( \frac{1+\eps}{r} \bigg)^{rk/4} r^{rk/2} \ge r^{rk/4} \ge \big( 2^{61} r^7 \big)^{2^{-10} r k} \ge \bigg( \frac{\mu^2}{p} \bigg)^{\mu r t},$$
as required. The third inequality is more delicate: observe first\footnote{See Appendix~\ref{app:binomial} for a proof of the second inequality.} that 
$$R\big( k, \ldots, k, k-t \big) \le \binom{rk-t}{k,\dots,k,k-t} \le e^{-(r-1)t^2/3rk} r^{rk-t} \le e^{-t^2/6k} r^{rk-t}.$$
and therefore, since $\delta \le 2^{-10} t^2/k^2$ and $p = 1/r - 2\eps \ge e^{-3\eps r} / r$, we have 
$$n \ge e^{-\delta k} r^{rk} \ge r^t e^{t^2/8k} \cdot R\big( k, \ldots, k, k-t \big) \ge \frac{m}{p^t} \cdot \exp\bigg( \frac{t^2}{8k} - 3\eps r t \bigg).$$
Moreover, by our choice of $t$ and $\mu$, we have 
$$\frac{t}{8k} = \frac{1}{2^{43} r^3} \ge \frac{1}{2^{47}r^3} + \frac{1}{2^{48}r^3} = \frac{2^{13} r^3}{\mu^2} + 4\eps r.$$
Finally, since $\sum_{j = 1}^r |S_j| \le \eps^2 k \le \eps t$, it follows that
$$|Y_i| = |W| \ge \bigg( \frac{1+\eps}{r} \bigg)^{\eps t} n \ge \frac{m}{p^t} \cdot \exp\bigg( \frac{t^2}{8k} - 4\eps r t \bigg) \ge \bigg( \frac{e^{2^{13} r^3 / \mu^2}}{p} \bigg)^t \, m,$$
as claimed. Thus $\chi$ contains a monochromatic $(t,m)$-book, and hence, by our choice of $m$, $\chi$ contains a monochromatic copy of $K_k$, as required.
\end{proof}

\section*{Acknowledgements}

Important steps in the work described in this paper were carried out during visits by various subsets of the authors to the University of Cambridge, IMPA, PUC-Rio, the IAS, 
and Sapienza Università di Roma. We thank each of these institutions for providing a wonderful working environment, and also Yuval Wigderson for pointing us to the paper~\cite{A72}. 

\appendix

\section{Bounding binomial coefficients}\label{app:binomial}

In this short appendix we will prove the following bound, which was used in Section~\ref{sec:final:proof}. 

\begin{lemma}\label{lem:multibounds}
For each $k,t,r \in \N$ with $3 \le t \le k$, 
$$\binom{rk-t}{k,\dots,k,k-t} \le e^{-(r-1)t^2/3rk} r^{rk-t}.$$
\end{lemma}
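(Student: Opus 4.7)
The plan is to apply a standard exponential-tilting bound for the multinomial coefficient and then compute the resulting Kullback--Leibler divergence explicitly. First I would use the multinomial theorem: for any probabilities $q_1, \dots, q_r \ge 0$ summing to $1$,
\[
1 \, = \, \bigg( \sum_{i=1}^r q_i \bigg)^{rk-t} \, \ge \, \binom{rk-t}{k, \dots, k, k-t} \, q_1^k \cdots q_{r-1}^k \, q_r^{k-t}.
\]
Taking $q_i = k/(rk-t)$ for $i<r$ and $q_r = (k-t)/(rk-t)$ (the tilt matching the multinomial coefficient) and rearranging yields
\[
\binom{rk-t}{k, \dots, k, k-t} \, \le \, r^{rk-t} \exp\big(-(rk-t) D\big),
\]
where $D := \sum_i q_i \log(r q_i)$ is the KL divergence of $(q_i)$ from the uniform distribution on $[r]$. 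Crucially, this step is exact and requires no Stirling approximation.

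It then remains to prove the quantitative bound $(rk-t) D \ge (r-1) t^2/(3rk)$. Writing $\tau = t/k \in (0, 1]$, so that $rk - t = k(r-\tau)$, and using the identity $\log(r(1-\tau)/(r-\tau)) = \log(1-\tau) - \log(1-\tau/r)$, a short algebraic simplification collapses $D$ to
\[
(rk-t) D \, = \, k \cdot h(\tau), \qquad h(\tau) \, := \, -(r-\tau)\log(1-\tau/r) + (1-\tau)\log(1-\tau),
\]
where $(1-\tau)\log(1-\tau)$ is read as $0$ at $\tau = 1$ by continuity.

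Finally, I would bound $h$ from below by two successive integrations. One checks that $h(0) = 0$, that $h'(\tau) = \log((1-\tau/r)/(1-\tau))$ (hence $h'(0) = 0$), and that $h''(\tau) = (r-1)/((r-\tau)(1-\tau))$. Since $(r-\tau)(1-\tau)$ is decreasing on $[0,1]$ from $r$ to $0$, we have $(r-\tau)(1-\tau) \le r$ on $[0,1]$, so $h''(\tau) \ge (r-1)/r$ throughout $[0, 1)$. Integrating twice from $0$ gives $h(\tau) \ge (r-1)\tau^2/(2r)$ on $[0,1)$, and the bound extends to $\tau=1$ by continuity. Hence $(rk-t) D \ge (r-1) t^2/(2rk)$, which is even stronger than what was claimed. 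No step presents any real obstacle: the tilting bound is standard, the algebraic simplification is elementary, and the generous slack between $1/2$ and $1/3$ means that no lower-order corrections ever need to be tracked.
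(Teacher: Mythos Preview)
Your argument is correct, but it differs from the paper's. The paper proceeds by a direct telescoping of ratios: it writes
\[
\binom{rk-t}{k,\dots,k,k-t}\binom{rk}{k,\dots,k}^{-1}
= \prod_{i=0}^{t-1}\frac{k-i}{rk-i}
= r^{-t}\prod_{i=0}^{t-1}\Big(1-\frac{(r-1)i}{rk-i}\Big),
\]
applies $1-x\le e^{-x}$ and $\sum_{i=0}^{t-1} i = t(t-1)/2 \ge t^2/3$ for $t\ge 3$, and then uses $\binom{rk}{k,\dots,k}\le r^{rk}$. By contrast, you use the optimal multinomial (entropy) bound and then lower-bound the resulting KL divergence via $h''(\tau)\ge (r-1)/r$ and two integrations. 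Your route is slightly longer but more conceptual; it avoids the hypothesis $t\ge 3$ entirely and yields the sharper constant $1/2$ in place of $1/3$. The paper's route is essentially a one-line computation once the product form is recognised, and the assumption $t\ge 3$ is exactly what converts $t(t-1)/2$ into $t^2/3$.
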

 
\begin{proof}
To prove this, observe that 
$$\binom{rk-t}{k,\dots,k,k-t} \binom{rk}{k,\dots,k}^{-1} = \, \prod_{i = 0}^{t - 1} \frac{k - i}{rk - i} = r^{-t} \,\prod_{i = 0}^{t-1} \bigg( 1 - \frac{(r-1)i}{rk - i} \bigg) \le r^{-t} \cdot e^{-(r-1)t^2/3rk}.$$
Since $\binom{rk}{k,\dots,k} \le r^{rk}$, the claimed inequality follows.
\end{proof}

\end{document}